\renewcommand{\epsilon}{\varepsilon}
\renewcommand{\limsup}{\varlimsup}
\renewcommand{\liminf}{\varliminf}
\newcommand{\abs}[1]{\left|#1\right|}
\newcounter{count}
\newcommand{\num}{\stepcounter{count}\the\value{count}}
\numberwithin{equation}{section}
\newtheorem{theorem}{Theorem}[section]
\newtheorem{lemma}[theorem]{Lemma}
\newtheorem{corollary}[theorem]{Corollary}
\newtheorem{proposition}[theorem]{Proposition}
\theoremstyle{definition}
\newtheorem{remark}[theorem]{Remark}
\newcommand{\cC}{\mathcal{C}}
\newcommand{\cE}{\mathcal{E}}
\newcommand{\cI}{\mathcal{I}}
\newcommand{\cN}{\mathcal{N}}
\newcommand{\cR}{\mathcal{R}}
\newcommand{\cS}{\mathcal{S}}
\newcommand{\PS}{\mathrm{PS}}
\title[Representations of integers as differences between PS numbers]{On the number of representations of integers as differences between Piatetski-Shapiro numbers}
\author[Y.\ Yoshida]{Yuuya Yoshida}
\address{Yuuya Yoshida\\
Nagoya Institute of Technology\\ Gokiso-cho\\ Showa-ku\\ Nagoya\\ 466-8555\\ Japan}
\curraddr{}
\email{yyoshida9130@gmail.com}
\subjclass[2020]{Primary 11D85 11D04 11D72, Secondary 11B30 11B25 37A44.} 
\keywords{Piatetski-Shapiro sequence, additive energy, arithmetic progression, equidistribution, discrepancy.} 
\begin{document}
\maketitle

\begin{abstract}
For $\alpha>1$, set $\beta=1/(\alpha-1)$. We show that, for every $1<\alpha<(\sqrt{21}+4)/5\approx1.717$, the number of pairs $(m,n)$ of positive integers with $d=\lfloor{n^\alpha}\rfloor - \lfloor{m^\alpha}\rfloor$ is equal to $\beta\alpha^{-\beta}\zeta(\beta)d^{\beta-1} + o(d^{\beta-1})$ as $d\to\infty$, where $\zeta$ denotes the Riemann zeta function. We use this result to derive an asymptotic formula for the number of triplets $(l,m,n)$ of positive integers such that $l<x$ and $\lfloor{l^\alpha}\rfloor + \lfloor{m^\alpha}\rfloor = \lfloor{n^\alpha}\rfloor$. Furthermore, we prove that the additive energy of the sequence $(\lfloor{n^\alpha}\rfloor)_{n=1}^N$, i.e., the number of quadruples $(n_1,n_2,n_3,n_4)$ of positive integers with $\lfloor{n_1^\alpha}\rfloor+\lfloor{n_2^\alpha}\rfloor=\lfloor{n_3^\alpha}\rfloor+\lfloor{n_4^\alpha}\rfloor$ and $n_1,n_2,n_3,n_4\le N$, is equal to $O_\alpha(N^{4-\alpha})$ when $1<\alpha\le4/3$.
\end{abstract}

\section{Introduction}
\subsection{The number of solutions of the equation $x+y=z$ in $\PS(\alpha)$}

Let $\PS(\alpha)$ be the set $\{ \lfloor{n^\alpha}\rfloor : n\in\mathbb{N} \}$ for $\alpha\ge1$, 
where $\lfloor{x}\rfloor$ (resp.\ $\lceil{x}\rceil$) denotes 
the greatest (resp.\ least) integer $\le x$ (resp.\ $\ge x$) for a real number $x$, 
and $\mathbb{N}$ denotes the set of all positive integers.
The set $\PS(\alpha)$ has infinitely many solutions of the equation $x+y=z$ if $\alpha=1,2$, 
and has no solutions of the same equation if $\alpha\ge3$ is an integer (Fermat's last theorem) \cite{Wiles}.
When $\alpha=2$, such a solution $(x,y,z)$ is called a \textit{Pythagorean triple}, 
and several asymptotic formulas are known for the number of Pythagorean triples.
For example, the number of Pythagorean triples with hypotenuse less than $x$ 
is estimated as \cite{Stronina} 
\begin{align*}
	&\quad \#\{ (l,m,n)\in\mathbb{N}^3 : n<x,\ l^2+m^2=n^2 \}\\
	&= \frac{1}{\pi}x\log x + Bx + O\bigl( x^{1/2}\exp(-C(\log x)^{3/5}(\log\log x)^{-1/5}) \bigr)
	\quad (x\to\infty)
\end{align*}
for an explicit constant $B$ and some $C>0$.
If the Riemann hypothesis is true, 
the above error term is improved \cite{NR}.
Also, the number of Pythagorean triples with both legs less than $x$ 
is estimated as \cite{BV} 
\begin{align*}
	&\quad \#\{ (l,m,n)\in\mathbb{N}^3 : l,m<x,\ l^2+m^2=n^2 \}\\
	&= \frac{4\log(1+\sqrt{2})}{\pi^2}x\log x + O(x) \quad (x\to\infty).
\end{align*}
For other asymptotic formulas, 
see \cite{LM, Liu} (primitive Pythagorean triples with perimeter less than $x$) 
and \cite{LM,Wild,DS,Zhai} (primitive Pythagorean triples with area less than $x$).

Consider the case of non-integral $\alpha>1$.
As a special case of \cite[Proposition~5.1]{FW}, 
it is known that, for all $\alpha\in(1,2)$ and sufficiently large $d\in\mathbb{N}$, 
the set $\PS(\alpha)$ has a solution of the equation $d=z-y$.
This fact immediately implies that the set $\PS(\alpha)$ has infinitely many solutions of the equation $x+y=z$ if $\alpha\in(1,2)$.
However, we almost never know an asymptotic formula for the number of such solutions in the case $\alpha\in(1,2)$.
Indeed, we only know the following asymptotic formula \cite[Corollary~1.3]{Yoshida}: 
for every $\alpha\in(1,6/5)$, 
\begin{equation*}
	\lim_{x\to\infty} \frac{\#\{ (l,m,n)\in\mathbb{N}^3 : n<x,\ 
	\lfloor{l^\alpha}\rfloor+\lfloor{m^\alpha}\rfloor=\lfloor{n^\alpha}\rfloor \}}{x^{3-\alpha}}
	= \frac{\Gamma(1+1/\alpha)^2}{(3-\alpha)\Gamma(2/\alpha)},
\end{equation*}
where $\Gamma$ denotes the gamma function.

In this paper, we estimate the number of solutions of the equation $d=z-y$ in $\PS(\alpha)$.
For a real number $\alpha>1$ and an integer $d\ge1$, 
define the number $\cN_\alpha(d)$ as 
\[
\cN_\alpha(d)
= \#\{ (m,n)\in\mathbb{N}^2 : \lfloor{n^\alpha}\rfloor - \lfloor{m^\alpha}\rfloor = d \}.
\]
Note that $\cN_\alpha(d)$ is finite for all $\alpha>1$ and $d\in\mathbb{N}$. 

\begin{theorem}\label{main0}
	Let $1<\alpha<(\sqrt{21}+4)/5$ and $\beta=1/(\alpha-1)$.
	Then 
	\[
	\lim_{d\to\infty} \frac{\cN_\alpha(d)}{d^{\beta-1}}
	= \beta\alpha^{-\beta}\zeta(\beta),
	\]
	where $\zeta$ denotes the Riemann zeta function.
\end{theorem}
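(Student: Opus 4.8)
The plan is to organize the count by the gap $j=n-m\ge 1$. Write $\cN_\alpha(d)=\sum_{j\ge 1}N_j(d)$ with $N_j(d)=\#\{m\ge 1:\lfloor(m+j)^\alpha\rfloor-\lfloor m^\alpha\rfloor=d\}$, and observe that $N_j(d)=0$ once $j\ge(d+1)^{1/\alpha}$, since then $(m+j)^\alpha-m^\alpha>d$ for every $m\ge 1$. The elementary identity underlying everything is that, setting $g_j(m)=(m+j)^\alpha-m^\alpha$, one has $\lfloor(m+j)^\alpha\rfloor-\lfloor m^\alpha\rfloor=\lfloor g_j(m)+\{m^\alpha\}\rfloor$, so $N_j(d)=\#\{m:g_j(m)+\{m^\alpha\}\in[d,d+1)\}$. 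Since $g_j$ is increasing with $g_j'(m)=\alpha\bigl((m+j)^{\alpha-1}-m^{\alpha-1}\bigr)\to 0$, the only $m$ contributing to $N_j(d)$ lie in the block $I_{j,d}=\{m:g_j(m)\in(d-1,d+1)\}$, an interval of consecutive integers about the root $m_{j,d}$ of $g_j(m)=d$; using $g_j(m_{j,d})=d$ and $\beta(\alpha-2)=1-\beta$ one computes $m_{j,d}\sim(d/(\alpha j))^{\beta}$ and $|I_{j,d}|\sim 1/g_j'(m_{j,d})\sim\beta\alpha^{-\beta}j^{-\beta}d^{\beta-1}$.

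For each fixed $j$ I would then prove $N_j(d)\sim\beta\alpha^{-\beta}j^{-\beta}d^{\beta-1}$. On $I_{j,d}$ the condition $g_j(m)+\{m^\alpha\}\in[d,d+1)$ says that $\{m^\alpha\}$ lies in a subinterval of $[0,1)$ whose endpoints drift with $m$ at the slow rate $g_j'\asymp d^{1-\beta}$, so I would cut $I_{j,d}$ into $R=R(d)\to\infty$ equal sub-blocks, on each of which the target interval is constant up to $O(1/R)$, and apply the Erd\H{o}s--Tur\'an inequality together with van der Corput's second-derivative estimate for the exponential sums $\sum_{m\sim m_{j,d}}e(km^\alpha)$ (where $e(t)=e^{2\pi it}$) — legitimate because $I_{j,d}$ has length of order $m_{j,d}^{2-\alpha}$ while $f''(m)\asymp k\,m_{j,d}^{\alpha-2}$ — to conclude that $(\{m^\alpha\})$ equidistributes on each sub-block. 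Summing the target lengths over $m\in I_{j,d}$ and comparing with $\frac{1}{g_j'(m_{j,d})}\int_{d-1}^{d+1}(\text{target length})\,du=\frac1{g_j'(m_{j,d})}$ then gives $N_j(d)=(1+o_d(1))\,\beta\alpha^{-\beta}j^{-\beta}d^{\beta-1}$.

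Next comes the tail. Fix a large parameter $J$. Every pair counted by $\sum_{j>J}N_j(d)$ has $m\le M_J$ with $M_J\asymp(d/J)^{\beta}$ and a unique corresponding $n$, so $\sum_{j>J}N_j(d)\le\#\{m\le M_J:\lfloor m^\alpha\rfloor+d\in\PS(\alpha)\}+O(1)$. Writing $\mathbf 1[k\in\PS(\alpha)]=\lceil(k+1)^{1/\alpha}\rceil-\lceil k^{1/\alpha}\rceil$ for $k$ large and splitting off the smooth part $\tfrac1\alpha\sum_{m\le M_J}(m^\alpha+d)^{1/\alpha-1}\ll d^{2/\alpha-1}+(d/J)^{\beta-1}$, the smooth contribution is $o(d^{\beta-1})+O(J^{1-\beta}d^{\beta-1})$ (here $2/\alpha-1<\beta-1$ is used), which tends to $0$ after dividing by $d^{\beta-1}$ and letting $J\to\infty$. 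The remaining fluctuation is handled by expanding the sawtooth $\psi$ into a truncated Fourier series and estimating the Piatetski-Shapiro exponential sums $\sum_{m\sim M}e\bigl(h(m^\alpha+d)^{1/\alpha}\bigr)$ for dyadic $M\le M_J$ and $h$ up to the truncation level by van der Corput's method. This is the step that imposes the hypothesis on $\alpha$: after the bookkeeping, the resulting error is $O(d^{a(\alpha)})$ for a piecewise-rational exponent $a(\alpha)$, and the inequality $a(\alpha)<\beta-1$ needed to make it negligible holds throughout the relevant ranges of $M$ and $h$ exactly when $5\alpha^2-8\alpha-1<0$, i.e.\ $1<\alpha<(\sqrt{21}+4)/5$.

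Finally I would combine: for each fixed $J$,
$\cN_\alpha(d)/d^{\beta-1}=\sum_{j\le J}N_j(d)/d^{\beta-1}+\sum_{j>J}N_j(d)/d^{\beta-1}\to\beta\alpha^{-\beta}\sum_{j\le J}j^{-\beta}$ up to a $\limsup$ error $\ll J^{1-\beta}$, and then letting $J\to\infty$, using $\tfrac1{\alpha-1}=\beta$, yields $\lim_{d\to\infty}\cN_\alpha(d)/d^{\beta-1}=\beta\alpha^{-\beta}\zeta(\beta)$. I expect the genuine difficulty to be concentrated in the tail estimate of the third paragraph: the exponential sums there must be controlled uniformly in ranges of $m$ (reaching up to $\asymp d^{\beta}$) and of $h$ that grow with $d$, and it is precisely the bookkeeping of these van der Corput bounds against the target size $d^{\beta-1}$ that both forces and exactly determines the admissible range of $\alpha$; by comparison the equidistribution in the main term is soft, requiring only cancellation in $\sum_{m\sim d^{\beta}}e(km^\alpha)$ over windows of length $\asymp d^{\beta-1}$, which the second-derivative test already supplies for all $\alpha\in(1,2)$.
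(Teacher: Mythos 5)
Your overall architecture --- decompose by the gap $j=n-m$, prove equidistribution on the short block of $m$ with $g_j(m)\in(d-1,d+1)$ for each fixed $j$, and treat large gaps as a tail --- is the same as the paper's, and your main-term analysis is essentially sound: your sub-block cutting of the drifting target interval plays exactly the role of the second coordinate $\{r\alpha n^{\alpha-1}\}$ in the paper's two-dimensional equidistribution statement (Lemma~\ref{lemWeyl}), and the cancellation you need in $\sum e(hm^\alpha)$ over blocks of length $\asymp d^{\beta-1}$ does follow from the second derivative test for all $\alpha\in(1,2)$.

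The gap is in the tail, and as written it is fatal. You propose to bound $\sum_{j>J}N_j(d)$ by summing the indicator of $\lfloor m^\alpha\rfloor+d\in\PS(\alpha)$ over $m\le M_J\asymp(d/J)^\beta$ and to control the sawtooth fluctuation via the exponential sums $\sum_{m\sim M}e\bigl(h(m^\alpha+d)^{1/\alpha}\bigr)$ for dyadic $M\le M_J$. But for $M$ comparable to $d^\beta$ these sums have no cancellation at all: writing $(m^\alpha+d)^{1/\alpha}=m+u(m)$ with $u(m)=\tfrac{d}{\alpha}m^{1-\alpha}+O(d^2m^{1-2\alpha})$, the nonconstant part $hu(m)$ of the phase varies by only $O(h\,dM^{1-\alpha})=O_J(h)$ over $m\sim M$, so for bounded $h$ the sum has absolute value $\gg M$; equivalently, van der Corput's bound $M\lambda_2^{1/2}+\lambda_2^{-1/2}$ with $\lambda_2\asymp hdM^{-\alpha-1}\asymp hd^{-2\beta}$ degenerates to the trivial bound. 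The $h=1$ harmonic alone then contributes $\asymp_J d^{\beta}\gg d^{\beta-1}$, so no bookkeeping of van der Corput exponents in the $m$-variable can close this range (the method only covers $m\ll d^{\beta(2\beta-1)/(2\beta+1)}$). This is exactly where the paper introduces the idea you are missing: for $R<r\le d^{1/\alpha}/4$ it reparametrizes by $r$, writes the number of admissible $m$ for each $r$ as $\lfloor f_r^{-1}(d+1)\rfloor-\lfloor f_r^{-1}(d-1)\rfloor$, and controls the leftover fractional parts by the discrepancy of $r\mapsto f_r^{-1}(d)$ over dyadic blocks of $r$, for which Lemmas~\ref{lemU2} and \ref{lemU3} supply $y''\asymp_\alpha d^\beta/r^{\beta+2}$ and $-y'''\asymp_\alpha d^\beta/r^{\beta+3}$; only the complementary range $r>d^{1/\alpha}/4$, i.e.\ $m\ll d^{1/\alpha}$, is handled by sums in $m$ of the type you propose, and that is precisely where they do oscillate. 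Relatedly, your claim that second-derivative bookkeeping ``exactly'' produces the threshold $(\sqrt{21}+4)/5$ is unsupported: in the paper that constant arises from the Sargos--Gritsenko third derivative test (Lemma~\ref{3rdderiv}) applied to $f_r^{-1}(d)$ in the variable $r$ (Lemma~\ref{lemU4'}); second-derivative information alone reaches only $\alpha<1+1/\sqrt{2}$.
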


Theorem~\ref{main0} yields the following asymptotic result immediately.

\begin{corollary}\label{main1}
	Let $1<\alpha<(\sqrt{21}+4)/5$ and $\beta=1/(\alpha-1)$.
	Then 
	\begin{equation*}
		\lim_{x\to\infty} \frac{\#\{ (l,m,n)\in\mathbb{N}^3 : l<x,\ 
		\lfloor{l^\alpha}\rfloor + \lfloor{m^\alpha}\rfloor = \lfloor{n^\alpha}\rfloor \}}
		{x^{\alpha(\beta-1)+1}}
		= \frac{\beta\alpha^{-\beta}\zeta(\beta)}{\alpha(\beta-1)+1}.
	\end{equation*}
\end{corollary}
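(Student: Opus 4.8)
The plan is to reduce the triplet count to a sum of values of $\cN_\alpha$ and then invoke Theorem~\ref{main0}. Fix $l\in\mathbb{N}$ and put $d=\lfloor l^\alpha\rfloor\ge1$; then $\lfloor l^\alpha\rfloor+\lfloor m^\alpha\rfloor=\lfloor n^\alpha\rfloor$ is equivalent to $d=\lfloor n^\alpha\rfloor-\lfloor m^\alpha\rfloor$, so the number of pairs $(m,n)\in\mathbb{N}^2$ solving it is precisely $\cN_\alpha(\lfloor l^\alpha\rfloor)$, a finite quantity. Summing over $l$ gives
\[
\#\{ (l,m,n)\in\mathbb{N}^3 : l<x,\ \lfloor l^\alpha\rfloor+\lfloor m^\alpha\rfloor=\lfloor n^\alpha\rfloor \}
= \sum_{1\le l<x}\cN_\alpha(\lfloor l^\alpha\rfloor),
\]
and the task becomes the asymptotic evaluation of this sum. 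Observe first that $1<\alpha<(\sqrt{21}+4)/5<2$ forces $\beta=1/(\alpha-1)>1$, hence $\beta-1>0$ and $\alpha(\beta-1)+1>0$, so in particular $x^{\alpha(\beta-1)+1}\to\infty$.

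Next I would upgrade the limit in Theorem~\ref{main0} to a uniform two-sided estimate: setting $c=\beta\alpha^{-\beta}\zeta(\beta)$, for each $\epsilon>0$ there is $d_0$ with $(c-\epsilon)d^{\beta-1}\le\cN_\alpha(d)\le(c+\epsilon)d^{\beta-1}$ for all integers $d\ge d_0$, and an $l_0=l_0(\epsilon)$ with $\lfloor l^\alpha\rfloor\ge d_0$ whenever $l\ge l_0$. The finitely many terms with $1\le l<l_0$ add up to a constant $O_\epsilon(1)=o(x^{\alpha(\beta-1)+1})$, so they may be discarded, and for $l_0\le l<x$ one sandwiches $\sum\cN_\alpha(\lfloor l^\alpha\rfloor)$ between $(c-\epsilon)\sum_{l_0\le l<x}\lfloor l^\alpha\rfloor^{\beta-1}$ and $(c+\epsilon)\sum_{l_0\le l<x}\lfloor l^\alpha\rfloor^{\beta-1}$.

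It then remains to estimate $S(x):=\sum_{l<x}\lfloor l^\alpha\rfloor^{\beta-1}$. Since $l^\alpha-1<\lfloor l^\alpha\rfloor\le l^\alpha$ and $\beta-1>0$, a first-order expansion gives $\lfloor l^\alpha\rfloor^{\beta-1}=l^{\alpha(\beta-1)}+O(l^{\alpha(\beta-1)-\alpha})$; comparing the monotone sum $\sum_{l<x}l^{\alpha(\beta-1)}$ with $\int_0^x t^{\alpha(\beta-1)}\,dt$ yields $\sum_{l<x}l^{\alpha(\beta-1)}=\tfrac{x^{\alpha(\beta-1)+1}}{\alpha(\beta-1)+1}+O(x^{\alpha(\beta-1)})$, and the $O$-terms contribute $O(x^{\alpha(\beta-1)-\alpha+1})$, $O(\log x)$, or $O(1)$ according as $\alpha(\beta-1)-\alpha$ is $>-1$, $=-1$, or $<-1$ --- in every case $o(x^{\alpha(\beta-1)+1})$. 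Hence $S(x)=\tfrac{x^{\alpha(\beta-1)+1}}{\alpha(\beta-1)+1}+o(x^{\alpha(\beta-1)+1})$; dividing the above sandwich by $x^{\alpha(\beta-1)+1}$ and letting $x\to\infty$ shows that both the $\limsup$ and the $\liminf$ of the normalized count lie within $\epsilon/(\alpha(\beta-1)+1)$ of $c/(\alpha(\beta-1)+1)$, and letting $\epsilon\to0$ finishes the proof. I do not anticipate a real obstacle: the only point requiring care is that every error term --- the initial segment, the floor correction, and the integral-comparison remainder --- has order strictly below $x^{\alpha(\beta-1)+1}$, which is guaranteed by $\alpha>0$ together with $\alpha(\beta-1)+1>0$.
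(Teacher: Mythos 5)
Your proposal is correct and follows exactly the paper's route: rewrite the count as $\sum_{1\le l<x}\cN_\alpha(\lfloor l^\alpha\rfloor)$ and apply Theorem~\ref{main0}. The paper states only this reduction and omits the routine $\epsilon$-sandwich and integral-comparison details, which you have supplied correctly.
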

\begin{proof}[Proof of Corollary~$\ref{main1}$ assuming Theorem~$\ref{main0}$]
	The above number of triplets $(l,m,n)$ is expressed as 
	$\sum_{1\le l<x} \cN_\alpha(\lfloor{l^\alpha}\rfloor)$.
	Hence, Corollary~\ref{main1} follows form Theorem~\ref{main0}.
\end{proof}

The set $\PS(\alpha)$ is called a \textit{Piatetski-Shapiro sequence} when $\alpha>1$ is non-integral.
When $f$ is a positive-integer-valued function with suitable properties, 
we can apply Theorem~\ref{main0} to the equation $f(x)+y=z$ 
in the same way as the proof of Corollary~\ref{main1}.
Although Corollary~\ref{main1} does not cover the case $\alpha\ge(\sqrt{21}+4)/5$, 
we expect that the asymptotic formula in Corollary~\ref{main1} should be true for every $1<\alpha\le2$.
If formally substituting $\alpha=2$ in Corollary~\ref{main1}, 
then we obtain 
\[
\lim_{x\to\infty} \frac{\#\{ (l,m,n)\in\mathbb{N}^3 : l<x,\ l^2 + m^2 = n^2 \}}{x}
= \infty.
\]
Actually, one can verify the asymptotic formula 
\begin{align*}
	&\quad \#\{ (l,m,n)\in\mathbb{N}^3 : l<x,\ l^2+m^2=n^2 \}\\
	&= \frac{x(\log x)^2}{\pi^2} + O(x\log x) \quad (x\to\infty)
\end{align*}
in a similar way to \cite{BV}.

\subsection{Additive energies of Piatetski-Shapiro sequences}

Recently, many researchers have paid attention to the \textit{additive energies} of strictly increasing integer sequences $(a_n)_{n=1}^N$, i.e., 
the number of quadruples $(n_1,n_2,n_3,n_4)\in\mathbb{N}^4$ such that $a_{n_1}+a_{n_2}=a_{n_3}+a_{n_4}$ and $n_1,n_2,n_3,n_4\le N$, 
since it is closely related to the metric Poissonian property \cite{Aistleitner1, Aistleitner2, Bloom, Larcher}.
From now on, we denote by $\cE_\alpha(N)$ the additive energy of $(\lfloor{n^\alpha}\rfloor)_{n=1}^N$.
Several bounds for $\cE_\alpha(N)$ are known in existing studies.
For example, it follows from \cite[p.~505]{Aistleitner1} (essentially \cite[Theorem~2]{Robert-Sargos}) that 
$\cE_\alpha(N) \ll_\epsilon N^{2+\epsilon} + N^{4-\alpha+\epsilon}$ for all $\alpha>1$, $\epsilon>0$ and $N\in\mathbb{N}$.
Also, it is known that $\cE_\alpha(N) \ll_\alpha N^{4-\alpha}\log(N+1)$ for all $\alpha\in(1,3/2]$ and $N\in\mathbb{N}$ \cite[p.~1000]{Garaev-Kueh}.
Moreover, it is easy to check that $\cE_\alpha(N) \gg N^2+N^{4-\alpha}$ for all $\alpha\ge1$ and $N\in\mathbb{N}$ (for instance, see \cite[p.~1000]{Garaev-Kueh}).
However, this lower bound is not equal to the above upper bounds as the growth rate.


In this paper, we estimate $\cE_\alpha(N)$ and obtain an upper bound for $\cE_\alpha(N)$ when $1<\alpha\le4/3$.

\begin{theorem}\label{main2}
	For all $\alpha\in(1,4/3]$ and $N\in\mathbb{N}$, 
	the additive energy $\cE_\alpha(N)$ of $(\lfloor{n^\alpha}\rfloor)_{n=1}^N$ is equal to $O_\alpha(N^{4-\alpha})$.
\end{theorem}

The upper bound in Theorem~\ref{main2} is best possible, since the known lower bound $N^2+N^{4-\alpha}$ is greater than $N^{4-\alpha}$. 
We prove Theorem~\ref{main2} by estimating a variant of $\cN_\alpha(d)$ in Section~\ref{AE}.

However, one can also estimate $\cE_\alpha(N)$ as follows.
Let $\cR_\alpha(N)$ be the number of solutions of the equation $x+y=N$ in $\PS(\alpha)$.
Then 
\begin{equation*}
	\cE_\alpha(N) \le \sum_{2\le n\le2N^\alpha} \cR_\alpha(n)^2.
\end{equation*}
Assume $\alpha\in(1,4/3)$.
Since $\cR_\alpha(N) \asymp N^{2/\alpha-1}$ as $N\to\infty$ \cite[Corollary~1.6]{Yoshida}, 
it can easily be checked that $\cE_\alpha(N) \ll_\alpha N^{4-\alpha}$.
This way is different from the proof in Section~\ref{AE}, and 
is easier to come up with.

\subsection{Related work on the number of solutions of a linear equation in $\PS(\alpha)$}

There are existing studies that examined whether a linear equation in two or three variables has infinitely many solutions or not.
An equation $f(x_1,\ldots,x_k)=0$ is called \textit{solvable} in a subset $\cS$ of $\mathbb{N}$ 
if $\cS^k$ contains infinitely many pairwise distinct tuples $(x_1,\ldots,x_k)$ with $f(x_1,\ldots,x_k)=0$.
Matsusaka and Saito \cite{MS} showed that, for all $t>s>2$ and $a,b,c\in\mathbb{N}$, 
the set of all $\alpha\in[s,t]$ such that the equation $ax+by=cz$ is solvable in $\PS(\alpha)$, 
has positive Hausdorff dimension.
Glasscock \cite{Glasscock2} showed that 
if the equation $y=ax+b$ with real numbers $a\not=0,1$ and $b$ is solvable in $\mathbb{N}$, 
then, for Lebesgue-a.e.\ $\alpha\in(1,2)$ (resp.\ $\alpha>2$), 
the equation $y=ax+b$ is solvable (resp.\ not solvable) in $\mathrm{PS}(\alpha)$.
Saito \cite{Saito1} improved Glasscock's result in the case $a>b\ge0$ as follows.
Let $a\not=1$ and $b$ be real numbers with $a>b\ge0$.
If the equation $y=ax+b$ is solvable in $\mathbb{N}$, 
then (i) for all $\alpha\in(1,2)$, the equation $y=ax+b$ is solvable in $\PS(\alpha)$; 
(ii) for all $t>s>2$, the set of all $\alpha\in[s,t]$ such that the equation $y=ax+b$ is solvable in $\PS(\alpha)$, 
has the Hausdorff dimension $2/s$.
Moreover, Saito \cite{Saito2} investigated the Hausdorff dimension of the set of $\alpha\in[s,t]$ such that 
a linear equation $y=a_1x_1+\cdots+a_nx_n$ with positive coefficients $a_1,\ldots,a_n$ has infinitely many solutions in $\PS(\alpha)$.
For example, he showed the following statement \cite[Theorem~1.2]{Saito2}: 
for Lebesgue-a.e.\ $\alpha>3$, the equation $x+y=z$ has at most finitely many solutions in $\PS(\alpha)$. 

\section{More general statements}
\subsection{Arithmetic progressions with fixed common difference}

We extend the definition of $\cN_\alpha(d)$ a little.
For a real number $\alpha>1$ and integers $d\ge1$ and $k\ge2$, 
define the number $\cN_{\alpha,k}(d)$ as 
\[
\cN_{\alpha,k}(d) = \#\Biggl\{ (n,r)\in\mathbb{N}^2 : 
\begin{array}{l}
	\forall j=0,1,\ldots,k-1,\\
	\lfloor{(n+rj)^\alpha}\rfloor = \lfloor{n^\alpha}\rfloor+dj
\end{array}
\Biggr\}.
\]
In other words, $\cN_{\alpha,k}(d)$ is the number of arithmetic progressions $P$ of length $k$ ($k$-APs) such that 
$(\lfloor{n^\alpha}\rfloor)_{n\in P}$ is an AP with common difference $d$.
The number $\cN_\alpha(d)$ is equal to $\cN_{\alpha,2}(d)$.
Note that $\cN_{\alpha,k}(d)$ is finite for all $\alpha>1$, all integers $d\ge1$ and $k\ge2$.
The following theorem is an extension of Theorem~\ref{main0}.

\begin{theorem}\label{main0'}
	Let $1<\alpha<(\sqrt{21}+4)/5$ and $\beta=1/(\alpha-1)$.
	Then, for every integer $k\ge2$, 
	\[
	\lim_{d\to\infty} \frac{\cN_{\alpha,k}(d)}{d^{\beta-1}}
	= \frac{\beta\alpha^{-\beta}\zeta(\beta)}{k-1}.
	\]
\end{theorem}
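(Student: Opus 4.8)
The plan is to prove Theorem~\ref{main0'} by the same method that yields Theorem~\ref{main0} (which is the case $k=2$), and to locate the single place where the length $k$ enters. Write $\cN_{\alpha,k}(d)=\sum_{r\ge1}\cN_{\alpha,k}(d;r)$, where $\cN_{\alpha,k}(d;r)$ counts the $n\in\mathbb N$ with $\lfloor(n+rj)^\alpha\rfloor=\lfloor n^\alpha\rfloor+dj$ for $j=0,\dots,k-1$. The first step is to re-encode these conditions geometrically: an integer $n$ is counted (for the given $r$) exactly when, with $N:=\lfloor n^\alpha\rfloor$, one has $n\in\bigcap_{j=0}^{k-1}I_j(N)$, where $I_j(N):=\bigl[(N+dj)^{1/\alpha}-rj,\ (N+dj+1)^{1/\alpha}-rj\bigr)$; hence $\cN_{\alpha,k}(d;r)=\sum_{N}\#\bigl(\mathbb Z\cap\bigcap_{j=0}^{k-1}I_j(N)\bigr)$. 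Since $\lvert I_j(N)\rvert=(N+dj+1)^{1/\alpha}-(N+dj)^{1/\alpha}<1$ for $N$ large, the intersection meets $\mathbb Z$ in at most one point, so each inner term is an indicator.

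Next I would truncate the sum over $r$ at some $R=R(d)\to\infty$ (a power of $d$). Using $\cN_{\alpha,k}(d;r)\le\cN_{\alpha,2}(d;r)$ together with the term-by-term bound $\cN_{\alpha,2}(d;r)\ll_\alpha r^{-\beta}d^{\beta-1}+1$ (from counting the $n$ with $(n+r)^\alpha-n^\alpha\in(d-1,d+1)$), and, for the range of large $r$, parametrising the pairs by $n=m+r$ and invoking the sparseness of $\PS(\alpha)$, one gets $\sum_{r>R}\cN_{\alpha,k}(d;r)=o(d^{\beta-1})$; so it suffices to treat $r\le R$. Fix such an $r$. The set $\bigcap_{j=0}^{k-1}I_j(N)$ is non-empty only for $N$ in a window about the point $N_0=N_0(r)=(d/(\alpha r))^{\alpha\beta}$ at which $\tfrac d\alpha N^{1/\alpha-1}=r$, and there, writing $I_j(N)=[\ell_j,\rho_j)$, its length equals $\epsilon(N)-W_k(N)+o(\epsilon(N))$, where $\epsilon(N):=(N+1)^{1/\alpha}-N^{1/\alpha}$ is the common length of the $I_j(N)$ up to a $1+o(1)$ factor and $W_k(N):=\max_{0\le j\le k-1}\ell_j-\min_{0\le j\le k-1}\ell_j$ is the spread of the left endpoints. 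A Taylor expansion gives $\ell_j-\ell_0=j\,c_1(N)-b(N)j^2+\cdots$ with $c_1(N)=\tfrac d\alpha N^{1/\alpha-1}-r$ (vanishing at $N_0$) and $b(N)>0$ of strictly smaller order than $\epsilon(N)$ throughout the window, whence $W_k(N)=(k-1)\lvert c_1(N)\rvert+O\bigl(b(N)(k-1)^2\bigr)$. This is the one structural change compared with $k=2$: the $k$ left endpoints span a range $(k-1)\lvert c_1(N)\rvert$, and this is where the factor $1/(k-1)$ originates.

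I would then bring in the equidistribution input already used for Theorem~\ref{main0}. Splitting the window according to which index realises $\max_j\ell_j$ (finitely many subintervals, since $c_1(N)$ is monotone in $N$), the quantity $\{\max_j\ell_j\}$ becomes $\{M^{1/\alpha}\}$ with $M$ ranging over an interval of length $\asymp d^\beta$ around $M\asymp d^{\alpha\beta}$; a discrepancy estimate for $(M^{1/\alpha})$ over such intervals then lets me replace the sum $\sum_N\#\bigl(\mathbb Z\cap\bigcap_jI_j(N)\bigr)$ over the window by $\sum_N\bigl(\epsilon(N)-W_k(N)\bigr)_+$ up to an admissible error. Integrating the resulting tent function, $\sum_N\bigl(\epsilon(N)-(k-1)\lvert c_1(N)\rvert\bigr)_+=\dfrac{\epsilon(N_0)^2}{(k-1)\lvert c_1'(N_0)\rvert}(1+o(1))$, and since $\epsilon(N_0)\sim r/d$ and $\lvert c_1'(N_0)\rvert=\tfrac{(\alpha-1)d}{\alpha^2}N_0^{1/\alpha-2}$ with $N_0=(d/(\alpha r))^{\alpha\beta}$, a short computation gives $\dfrac{\epsilon(N_0)^2}{\lvert c_1'(N_0)\rvert}\sim\beta\alpha^{-\beta}r^{-\beta}d^{\beta-1}$. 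Summing over $r\le R$ and using $\sum_{r\le R}r^{-\beta}\to\zeta(\beta)$ yields $\cN_{\alpha,k}(d)=\dfrac{\beta\alpha^{-\beta}\zeta(\beta)}{k-1}\,d^{\beta-1}+o(d^{\beta-1})$; taking $k=2$ recovers Theorem~\ref{main0}.

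The main obstacle is the discrepancy step: one needs a bound for the discrepancy of $(M^{1/\alpha})$ along short intervals (length $\asymp d^\beta$, centre $\asymp d^{\alpha\beta}$) strong enough that, multiplied by the window length and summed over $r\le R$, the total error is $o(d^{\beta-1})$ — this is precisely what forces $\alpha<(\sqrt{21}+4)/5$. For general $k$ the discrepancy input needed is the very one available from the case $k=2$ (here it is applied to at most $k-1$ subintervals of a window that is itself $(k-1)$ times shorter, since $\bigcap_{j=0}^{k-1}I_j(N)\subseteq I_0(N)\cap I_1(N)$), so the constraint on $\alpha$ is unchanged. The only genuinely new point to verify carefully is that the quadratic and higher corrections $b(N)j^2+\cdots$ to $\ell_j-\ell_0$ are uniformly negligible across the window and across $r\le R$, so that the tent-function evaluation — and hence the clean factor $1/(k-1)$ — survives with total error $o(d^{\beta-1})$.
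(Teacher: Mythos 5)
Your overall architecture --- fix $r$, localize $N$ to a window around $N_0$, replace the count by $\sum_N(\epsilon(N)-W_k(N))_+$, and integrate the resulting tent function --- is a legitimate reparametrization of the argument the paper actually runs in the variable $n$, and your identification of the factor $1/(k-1)$ with the spread of the $k$ left endpoints matches the paper's $\mu(\cC_k^{\pm}(\epsilon))=(1\pm\epsilon)^2/(k-1)$. However, there are two concrete gaps.

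First, the discrepancy input you invoke for the main term is not available at the strength you need. You ask for the discrepancy of $(M^{1/\alpha})$ over an interval of length $T\asymp d^{\beta}$ centred at $M\asymp d^{\alpha\beta}=d^{\beta+1}$, and you need the resulting error $DT$ to be $o(d^{\beta-1})$ for each fixed $r$. The second derivative of $hM^{1/\alpha}$ there is $\asymp hd^{-\beta-2}$, so the van der Corput bound (Lemma~\ref{2ndderiv}) already contains the unavoidable term $\lambda_2^{-1/2}\asymp h^{-1/2}d^{(\beta+2)/2}$, and $(\beta+2)/2<\beta-1$ forces $\beta>4$, i.e.\ $\alpha<5/4$; the third derivative test (Lemma~\ref{3rdderiv}) is worse. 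This is not a removable technicality: detecting $\{M^{1/\alpha}\}\in[1-L,1)$ over such a window amounts to counting integers $M$ in $\asymp d^{\beta-1}$ short intervals with an $O(1)$ rounding error each, so one must exhibit cancellation among those $d^{\beta-1}$ errors --- that is, pass back to the variable $n$ and prove equidistribution of $(\{n^\alpha\},\{r\alpha n^{\alpha-1}\})$ over an interval of length $\asymp d^{\beta-1}$. This is exactly Lemma~\ref{lemWeyl}, whose proof hinges on the arithmetic condition $c_2-c_1\in\mathbb{N}$ for the pure second-coordinate frequencies; nothing in your $M$-aspect formulation supplies this.

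Second, your disposal of the range of large $r$ is insufficient on the stated range of $\alpha$. The bound $\cN_{\alpha,2}(d;r)\ll_\alpha r^{-\beta}d^{\beta-1}+1$ accumulates, over $r\le d^{1/\alpha}$, a total error $\asymp d^{1/\alpha}$, and $1/\alpha>\beta-1$ precisely when $\alpha>(1+\sqrt{5})/2\approx1.618$; likewise ``sparseness of $\PS(\alpha)$'' in the regime $n\ll d^{1/\alpha}$ gives only the trivial bound $O(d^{1/\alpha})$. Hence for $\alpha\in\bigl((1+\sqrt{5})/2,(\sqrt{21}+4)/5\bigr)$ the accumulated $+1$'s swamp the main term and must be shown to cancel on average. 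That is where the paper does most of its work (the error terms $E_1,E_2$ of Proposition~\ref{limsup} together with Lemmas~\ref{lemU4}--\ref{lemU5}), using the discrepancy of $(f_r^{-1}(d))_r$ in the $r$-aspect and of $((n^\alpha+d)^{1/\alpha})_n$, via the derivative computations of Lemmas~\ref{lemU2} and \ref{lemU3}. It is these estimates --- not your main-term step --- that produce the threshold $(\sqrt{21}+4)/5$, so your proposal both omits the hardest part of the proof and misattributes the source of the constraint on $\alpha$.
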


\textbf{Notation.} From now on, we denote by $\{x\}=x-\lfloor{x}\rfloor$ the fractional part of a real number $x$, 
and use the notations ``$o(\cdot)$, $O(\cdot)$, $\sim$, $\asymp$, $\ll$'' in the usual sense.
If implicit constants depend on parameters $a_1,\ldots,a_n$, 
we often write ``$O_{a_1,\ldots,a_n}(\cdot)$, $\asymp_{a_1,\ldots,a_n}$, $\ll_{a_1,\ldots,a_n}$'' 
instead of ``$O(\cdot)$, $\asymp$, $\ll$''.
Also, the expression $a/bc$ always means $\frac{a}{bc}$ and does not mean $\frac{a}{b}c$.

\vskip1ex
Theorem~\ref{main0'} is derived from the following propositions and lemmas immediately.

\begin{proposition}\label{liminf}
	Let $1<\alpha<2$ and $\beta=1/(\alpha-1)$.
	Then, for every integer $k\ge2$, 
	\[
	\liminf_{d\to\infty} \frac{\cN_{\alpha,k}(d)}{d^{\beta-1}}
	\ge \frac{\beta\alpha^{-\beta}\zeta(\beta)}{k-1}.
	\]
\end{proposition}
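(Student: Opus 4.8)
The plan is to reduce the counting problem for $\cN_{\alpha,k}(d)$ to an equidistribution statement about fractional parts, and then extract a lower bound by a positivity/Fatou-type argument. First I would fix $d$ and analyze, for each admissible starting point $n$, the condition that $\lfloor(n+rj)^\alpha\rfloor = \lfloor n^\alpha\rfloor + dj$ holds for all $j = 0,1,\dots,k-2$. Writing $\beta = 1/(\alpha-1)$, the key observation is that the difference $\lfloor n^\alpha\rfloor$ grows like $n^\alpha$, so the relevant values of $n$ are of size $\asymp d^{\beta}$ (more precisely, $n \approx (\alpha^{-1} m)^{\beta}$ where $m \asymp d^{\beta-1}$ indexes the "level"), and the common difference $r$ must be $\asymp d^{\beta-1}$ for the arithmetic progression in $\PS(\alpha)$ to have step exactly $d$. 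I would parametrize: for the AP to have the prescribed common difference, one needs $n^\alpha$ close to an integer and, simultaneously, $(n+r)^\alpha, (n+2r)^\alpha,\dots$ to land in the correct unit intervals, which translates after a Taylor expansion (valid since $r = o(n)$) into a system of constraints on the fractional parts $\{n^\alpha\}$ and on $\alpha n^{\alpha-1} r$ modulo $1$.

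Next I would set up the counting as a sum over $n$ of an indicator function, and rewrite that indicator in terms of fractional parts lying in short intervals whose lengths are governed by the "slack" $\alpha n^{\alpha-1} - $ (nearest useful quantity). The heuristic main term arises because, for a positive proportion of $n$ in the relevant range, the value $\alpha n^{\alpha-1}$ lies just below an integer $r$ by an amount $\ge$ the required threshold, and the density of such $n$ is controlled by the equidistribution of $(n^\alpha)$ and $(\alpha n^{\alpha-1})$ modulo $1$. Summing the resulting densities over the dyadic-like range of levels $m \le$ something of order $d^{\beta-1}$ and over the valid $r$, and recognizing the sum $\sum_{r} r^{-1}\cdot(\text{something})$ as a Riemann sum converging to $\zeta(\beta)$ after the substitution, should produce the constant $\beta\alpha^{-\beta}\zeta(\beta)/(k-1)$; the factor $1/(k-1)$ enters because requiring $k-1$ consecutive correct steps rather than a single one shrinks the admissible fractional-part window by essentially a factor $k-1$ at the relevant scale (the constraints on $\{n^\alpha\}$ from the $j$-th step are nested intervals of length $\asymp 1/((k-1)\cdot\text{scale})$).

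For the \emph{lower bound} specifically — which is all Proposition~\ref{liminf} asserts — I do not need a full asymptotic; it suffices to bound $\cN_{\alpha,k}(d)$ from below by restricting $n$ to a convenient subfamily where the fractional-part conditions are easy to guarantee. The natural choice is to take $n$ in a short subinterval of each level for which $\{n^\alpha\}$ is small and $\{\alpha n^{\alpha-1} r\}$ is controlled; by the three-distance theorem or a direct equidistribution/discrepancy estimate (Erdős–Koksma or van der Corput type bounds for $\{n^\alpha\}$ with $1<\alpha<2$), the number of such $n$ is at least $(1-o(1))$ times the expected density. Then, after summing over $r \ge 1$ and over levels $m$, a lower Riemann-sum estimate yields $\ge (1-o(1))\,\beta\alpha^{-\beta}\zeta(\beta)/(k-1)$, and taking $\liminf$ gives the claim. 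The main obstacle I anticipate is controlling the fractional parts $\{n^\alpha\}$ uniformly over the many levels $m$ simultaneously: the equidistribution of $(n^\alpha)_n$ modulo $1$ is classical for $1<\alpha<2$, but here I need it in \emph{short intervals} of length $\asymp d^{\beta-1}$ sitting at height $\asymp d^{\beta}$, with a quantitative discrepancy bound that beats the interval length — this is exactly where the hypothesis $\alpha<2$ (and, for the matching upper bound elsewhere, the sharper $\alpha < (\sqrt{21}+4)/5$) is needed, via exponential-sum estimates for $\sum_{n} e(t n^\alpha)$. For the lower bound alone the weaker range $\alpha<2$ should suffice because one only needs a one-sided density estimate on a well-chosen subfamily rather than a two-sided count.
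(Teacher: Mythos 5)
Your overall outline matches the paper's: Taylor-expand $(n+rj)^\alpha$ to reduce the AP condition to membership of the pair $(\{n^\alpha\},\{r\alpha n^{\alpha-1}\})$ in a region of measure $1/(k-1)$, restrict to a slightly shrunk region and to bounded $r$ to get a one-sided bound, and sum over $r$ to produce $\zeta(\beta)$. But there is a genuine gap at exactly the point you flag as "the main obstacle": you never establish the equidistribution statement you need, and the tools you propose for it would not work. For fixed $r$, the admissible $n$ with $\lfloor r\alpha n^{\alpha-1}\rfloor = d-s$ form an interval of length $\asymp d^{\beta-1}/r^{\beta}$ sitting at height $\asymp d^{\beta}$, i.e.\ of length $X^{2-\alpha}$ at height $X$. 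Over such an interval the second coordinate $r\alpha n^{\alpha-1}$ has nearly constant derivative $\asymp X^{\alpha-2}$ and its total increment is $O(1)$, so it does \emph{not} equidistribute modulo $1$ by any generic discrepancy bound (Erd\H{o}s--Tur\'an--Koksma, van der Corput, or the three-distance theorem); a short interval of a slowly varying sequence simply traces out an arc. The paper's Lemma~\ref{lemWeyl} gets around this by choosing the interval endpoints to be $((d+c_i)/(r\alpha))^{\beta}$ with $c_2-c_1\in\mathbb{N}$, so that the total increment of $r\alpha n^{\alpha-1}$ across the interval tends to the integer $c_2-c_1$ and the sequence wraps around the circle an asymptotically whole number of times; this is what kills the $h_1=0$, $h_2\neq0$ Weyl harmonics (the $h_1\neq0$ harmonics are handled by the second derivative test). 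Without this device, or an equivalent one, your "one-sided density estimate on a well-chosen subfamily" cannot be justified, and the weaker range $\alpha<2$ does not by itself rescue the argument.

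Two further inaccuracies in your setup. First, the common difference $r$ is not $\asymp d^{\beta-1}$: since $r\alpha n^{\alpha-1}\approx d$ forces $n\approx (d/(r\alpha))^{\beta}$, every fixed $r\ge1$ contributes, the number of valid $n$ for each $r$ being $\asymp d^{\beta-1}/r^{\beta}$; you appear to have conflated the size of $r$ with the count of $n$ per $r$. Second, $\zeta(\beta)$ arises as the literal Dirichlet series $\sum_{r\ge1}r^{-\beta}$ (convergent because $\beta>1$), truncated at $r\le R$ and then $R\to\infty$, not as a Riemann sum; getting the exact constant requires summing over \emph{all} $r$, so a lower bound must let $R\to\infty$ after the limit in $d$, as the paper does.
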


\begin{proposition}\label{limsup}
	Let $1<\alpha<2$ and $\beta=1/(\alpha-1)$.
	For an integer $r\ge1$, 
	define the strictly increasing function $f_r\colon [0,\infty)\to[r^\alpha,\infty)$ as $f_r(x)=(x+r)^\alpha-x^\alpha$.
	Then, for every integer $k\ge2$, 
	\[
	\limsup_{d\to\infty} \frac{\cN_{\alpha,k}(d)}{d^{\beta-1}}
	\le \frac{\beta\alpha^{-\beta}\zeta(\beta)}{k-1}
	+ \limsup_{d\to\infty} \frac{E_1(d)+E_2(d)}{d^{\beta-1}},
	\]
	where $E_1(d)$ and $E_2(d)$ are defined as 
	\[
	E_1(d) = \#\bigl\{ r\in\mathbb{N} : r\le d^{1/\alpha}/4,\ \{f_r^{-1}(d)\} + C_1d^{\beta-1}/r^\beta > 1 \bigr\}
	\]
	and 
	\[
	E_2(d) = \#\bigl\{ n\in\mathbb{N} : n<C_2d^{1/\alpha},\ \{(n^\alpha+d)^{1/\alpha}\} + 2d^{1/\alpha-1} > 1 \bigr\}
	\]
	with suitable constants $C_1,C_2>0$ only depending on $\alpha$.
\end{proposition}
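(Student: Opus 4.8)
The plan is to prove Proposition~\ref{limsup} by counting the pairs $(n,r)$ defining $\cN_{\alpha,k}(d)$ according to the value of the common difference $r$ of the underlying $k$-AP. Fix a pair $(n,r)$ counted by $\cN_{\alpha,k}(d)$. The defining condition $\lfloor{(n+rj)^\alpha}\rfloor = \lfloor{n^\alpha}\rfloor + dj$ for $j=0,1,\dots,k-1$ forces, taking $j=1$, that $\lfloor{(n+r)^\alpha}\rfloor - \lfloor{n^\alpha}\rfloor = d$, i.e. $(n+r)^\alpha - n^\alpha = f_r(n)$ lies in an interval of length essentially $1$ around $d$. Since $f_r$ is strictly increasing in $n$, this pins $n$ to within $O(1/f_r'(n))$ of the real solution $f_r^{-1}(d)$. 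A short computation with $f_r'(x) \asymp r x^{\alpha-1}$ shows that for each $r$ the number of admissible $n$ is at most $1 + C_1' d^{\beta-1}/r^\beta$ for a constant $C_1'$ depending only on $\alpha$ (using $x \asymp (d/r)^{\beta}$ on the relevant range, valid while $r$ is not too large compared with $d^{1/\alpha}$); more precisely, the count of $n$ with $f_r(n)$ in the half-open unit interval $[d, d+1)$ is exactly $1$ unless the fractional part $\{f_r^{-1}(d)\}$ is so close to $1$ that rounding carries an extra lattice point, which is the event recorded by $E_1(d)$ after summing the $+1$'s via $\sum_{r\le d^{1/\alpha}/4} C_1 d^{\beta-1}/r^\beta \sim C_1 \zeta(\beta) d^{\beta-1}$ and matching the main term $\beta\alpha^{-\beta}\zeta(\beta)/(k-1)$.

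The second step is to handle the remaining ranges of $r$. When $r$ is large, say $r > d^{1/\alpha}/4$, one instead uses the $j=1$ condition rewritten as $n < (n^\alpha + d)^{1/\alpha} - r$ together with $n+r \le (n^\alpha+d)^{1/\alpha}$; here $n$ is small, $n \ll d^{1/\alpha}$, and for each such $n$ there is at most one $r$, contributing the term $E_2(d)$ which records exactly when the fractional part $\{(n^\alpha+d)^{1/\alpha}\}$ combined with the error $2d^{1/\alpha-1}$ forces an extra integer $r$ to be counted. I would split at the threshold $d^{1/\alpha}/4$ (any fixed fraction works), treat $r$ below it by the $n$-counting argument of the previous paragraph, and $r$ above it by swapping the roles of $n$ and $r$, so that the "$+1$ per value" contributions collect into $E_1(d)$ and $E_2(d)$ respectively while the smooth part of the count assembles into the claimed main term after the zeta-sum.

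The key bookkeeping is to show that the main term produced is exactly $\beta\alpha^{-\beta}\zeta(\beta)/(k-1)$ and not something larger: the factor $1/(k-1)$ must appear. This comes from the observation that, having fixed the value $d$ of the common difference of $(\lfloor{(n+rj)^\alpha}\rfloor)_j$, imposing the conditions for $j=2,\dots,k-1$ beyond $j=1$ cuts down the admissible $n$ further. Concretely, the $k$-AP condition means $f_r$ stays within a unit interval of $jd$ for each $j$; by convexity of $x\mapsto x^\alpha$ (so $1<\alpha<2$ makes it concave, actually), the spacing $f_r(n+r(j+1)) - f_r(n+rj)$ varies slowly, and the constraint that all $k-1$ consecutive second differences round to $0$ restricts $n$ to a subinterval of length roughly $1/(k-1)$ of the interval that the $j=1$ constraint alone allows. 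Summing this refined per-$r$ count reproduces the factor $1/(k-1)$ in the main term while the boundary/rounding discrepancies are absorbed into $E_1$ and $E_2$ as before (the sets $E_1,E_2$ are defined with the $k=2$ rounding condition precisely because the extra $j$'s only shrink the admissible set, so the $k=2$ error bounds dominate uniformly in $k$).

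The main obstacle I expect is the precise extraction of the constants $C_1, C_2$ and the verification that the smooth part of the per-$r$ count is $d^{\beta-1}/r^\beta$ times the right constant, uniformly over the dyadic-type range $1 \le r \le d^{1/\alpha}/4$, including the delicate passage from "number of integers $n$ with $f_r(n) \in [d, d+1)$" to "$f_r^{-1}(d+1) - f_r^{-1}(d) + (\text{a } 0/1 \text{ rounding term})$" via the mean value theorem applied to $f_r^{-1}$; one must check $f_r^{-1}$ has nearly constant derivative on a unit interval, which needs $r$ bounded away from the degenerate regime, hence the cutoff at $d^{1/\alpha}/4$. Everything after that is routine estimation, the tail $\sum_{r > d^{1/\alpha}/4}$ being negligible relative to $d^{\beta-1}$ since $\beta > 1$, and the $\zeta(\beta)$-sum converging for the same reason.
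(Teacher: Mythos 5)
There is a genuine gap, and it is the central one: your proposal never invokes (or replaces) the equidistribution statement that actually produces the main term. You reduce the condition $\lfloor{(n+r)^\alpha}\rfloor-\lfloor{n^\alpha}\rfloor=d$ to ``$f_r(n)$ lies in a unit interval around $d$,'' but the exact condition is $\{n^\alpha\}+f_r(n)\in[d,d+1)$, i.e.\ a unit window whose position depends on $\{n^\alpha\}$. The only interval condition on $n$ alone that you can honestly extract is $f_r(n)\in(d-1,d+1)$, and counting integers in $f_r^{-1}\bigl((d-1,d+1)\bigr)$ gives a main term $2\beta\alpha^{-\beta}\zeta(\beta)d^{\beta-1}$ --- twice the claimed constant. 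To recover the factor $1/2$ (and, for $k\ge3$, the factor $1/(k-1)$) one must know how $\{n^\alpha\}$ --- in fact the pair $(\{n^\alpha\},\{r\alpha n^{\alpha-1}\})$ --- distributes as $n$ runs over the short interval $\bigl[((d+c_1)/r\alpha)^\beta,((d+c_2)/r\alpha)^\beta\bigr)$ of length $\asymp d^{\beta-1}$. That is exactly Lemma~\ref{lemWeyl}, proved via Weyl's criterion and the van der Corput second-derivative test, and it is the heart of the proof; your sketch contains no substitute for it. Your justification of the $1/(k-1)$ factor (``the extra $j$'s restrict $n$ to a subinterval of length roughly $1/(k-1)$'') has the same defect: it is a measure statement about the convex region $\cC_k^{+}(\epsilon)=\{(y_0,y_1): -\epsilon\le y_0<1,\ -\epsilon\le y_0+(k-1)y_1<1\}$ of area $(1+\epsilon)^2/(k-1)$, and converting that area into a count of lattice points $n$ again requires two-dimensional equidistribution, not convexity of $x\mapsto x^\alpha$. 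A smaller but real error: your claim that for each $r$ the count of admissible $n$ ``is exactly $1$ unless the fractional part is close to $1$'' is false for small $r$, since the relevant interval has length $F_r(d)\asymp d^{\beta-1}/r^\beta\to\infty$ as $d\to\infty$ for fixed $r$.

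There is also an architectural difference worth noting. The paper does \emph{not} try to run a sharp count uniformly over $1\le r\le d^{1/\alpha}/4$: it fixes $R$, uses Lemma~\ref{lemWeyl} to get the exact constant for each $r\le R$, and for $r>R$ uses only the crude bound $d-1<f_r(n)<d+1$ --- where losing a constant factor is harmless because $\sum_{r>R}r^{-\beta}\to0$ as $R\to\infty$. It is in this tail range (split at $r\le(d-1)^{1/\alpha}/4$ versus $r>(d-1)^{1/\alpha}/4$, with the roles of $n$ and $r$ swapped in the second piece, as you correctly anticipate) that the rounding terms $E_1$ and $E_2$ arise, via $\lfloor{f_r^{-1}(d+1)}\rfloor-\lfloor{f_r^{-1}(d-1)}\rfloor=F_r(d-1)+\{f_r^{-1}(d-1)\}-\{f_r^{-1}(d+1)\}$ and its analogue for $(n^\alpha+d\pm1)^{1/\alpha}$. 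Your plan to extract the sharp constant directly from interval counts over the full range of $r$ cannot work as stated; you would need to add the equidistribution input for each fixed $r$ and restructure the sum over $r$ accordingly.
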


\begin{lemma}\label{lemU4}
	Let $1<\alpha<1+1/\sqrt{2}$, $\beta=1/(\alpha-1)$ and $C_1>0$.
	Then $E_1(d)$ defined in Proposition~$\ref{limsup}$ is equal to $o(d^{\beta-1})$ as $d\to\infty$.
\end{lemma}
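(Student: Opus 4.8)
The plan is to bound $E_1(d)$ by counting the integers $r\le d^{1/\alpha}/4$ for which $\{f_r^{-1}(d)\}$ lies in an interval near $1$ of length $C_1 d^{\beta-1}/r^\beta$. The natural change of variables is $x=f_r^{-1}(d)$, i.e.\ $(x+r)^\alpha-x^\alpha=d$; since $f_r(x)\asymp_\alpha r x^{\alpha-1}$ for $x$ large, one gets $x\asymp_\alpha (d/r)^{1/(\alpha-1)}=(d/r)^\beta$, so $x$ ranges over a long interval as $r$ is fixed — but here $r$ varies and we are looking at a single value $d$. The key observation is that as $r$ runs through $1,2,\dots,\lfloor d^{1/\alpha}/4\rfloor$, the quantities $g(r):=f_r^{-1}(d)$ form a sequence whose consecutive gaps and whose behaviour modulo $1$ can be controlled. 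First I would compute, by implicit differentiation of $(g(r)+r)^\alpha - g(r)^\alpha = d$, that
\[
g'(r) = -\,\frac{(g(r)+r)^{\alpha-1}}{(g(r)+r)^{\alpha-1}-g(r)^{\alpha-1}},
\]
and estimate the denominator from below using the mean value theorem: $(g+r)^{\alpha-1}-g^{\alpha-1}\asymp_\alpha r g^{\alpha-2}$ when $\alpha<2$ (so the exponent $\alpha-2$ is negative and $g^{\alpha-2}$ is decreasing in $g$). Combined with $g(r)\asymp_\alpha (d/r)^\beta$, this yields $\lvert g'(r)\rvert \asymp_\alpha g(r)/r \asymp_\alpha d^\beta/r^{\beta+1}$.

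With this derivative bound in hand, the standard discrepancy/counting heuristic says that the number of $r\le R$ with $\{g(r)\}$ in a set of measure $\delta_r$ is, morally, $\sum_{r\le R}\delta_r$ plus an error governed by how fast $g$ moves. Here $\delta_r = C_1 d^{\beta-1}/r^\beta$, so the "main term" is $C_1 d^{\beta-1}\sum_{r\le d^{1/\alpha}/4} r^{-\beta}$. Since $1<\alpha<1+1/\sqrt2$ forces $\beta=1/(\alpha-1)>\sqrt2>1$, the sum $\sum r^{-\beta}$ converges, so this contribution is $O_\alpha(d^{\beta-1})$ — the right order of magnitude but \emph{not} $o(d^{\beta-1})$. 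This is the crux of the matter and the main obstacle: a crude union bound is not enough, and one must exploit that $\{g(r)\}$ is genuinely equidistributed (or at least not concentrated near $1$) rather than behaving adversarially. The idea is to split the range of $r$ dyadically, $r\in(R,2R]$ with $R$ running over powers of $2$ up to $d^{1/\alpha}/4$, and on each block use that $\lvert g'(r)\rvert \asymp_\alpha d^\beta/R^{\beta+1}$ is roughly constant; if this quantity is $\gg 1$, then $g$ sweeps across many unit intervals as $r$ traverses the block and an elementary argument (partitioning $(R,2R]$ into subintervals on which $g$ changes by $O(1)$) gives that the number of $r$ with $\{g(r)\}>1-\delta_R$ is $\ll R\delta_R + (\text{number of unit-length sweeps})\cdot 1 = R\delta_R + \lvert g'(r)\rvert R$, wait — one must instead bound it by $\delta_R \cdot \lvert g'\rvert R + (\text{boundary terms})$, i.e.\ roughly $\delta_R$ times the total variation of $g$ on the block.

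Carrying this out: on the dyadic block $r\in(R,2R]$ the total variation of $g$ is $\asymp_\alpha d^\beta/R^\beta$, and $\delta_R \asymp C_1 d^{\beta-1}/R^\beta$, so the contribution of the block is $\ll_\alpha (d^\beta/R^\beta)\cdot(d^{\beta-1}/R^\beta) + O(1) = d^{2\beta-1}/R^{2\beta} + O(1)$. Summing over dyadic $R\le d^{1/\alpha}/4$: the $d^{2\beta-1}/R^{2\beta}$ terms are dominated by their value at the smallest $R$, namely $R\asymp1$, giving $O_\alpha(d^{2\beta-1})$, while the $O(1)$ terms number $O(\log d)$. Since $\alpha<1+1/\sqrt2 \iff \beta>\sqrt2 \iff 2\beta-1 > \beta-1+ (\beta-1)$... this is where the precise inequality enters: we need $2\beta-1 < \beta-1$, which is false, so the dyadic blocks with small $R$ must be handled separately by a direct argument (for $r$ bounded, $g(r)\asymp_\alpha d^\beta$ and $\delta_r\asymp C_1 d^{\beta-1}$, so finitely many such $r$ each contribute at most $1$, total $O_\alpha(1)$), and for the remaining range $R\ge R_0$ with $R_0=R_0(d)\to\infty$ chosen so that $\lvert g'(r)\rvert$ stays $\ge1$, one optimizes $R_0$ against the two error sources. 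Concretely, choosing the threshold where $d^\beta/R^{\beta+1}\asymp1$, i.e.\ $R\asymp d^{\beta/(\beta+1)}$, and checking that $\beta/(\beta+1) < 1/\alpha$ under our hypothesis, one finds the total bound is $o(d^{\beta-1})$ precisely because $\beta>\sqrt2$; the hypothesis $\alpha<1+1/\sqrt2$ is exactly what makes the exponents line up. I would finish by collecting the three pieces — the $O_\alpha(1)$ from small $r$, the main dyadic sum giving $o(d^{\beta-1})$, and the $O(\log d)=o(d^{\beta-1})$ boundary terms — to conclude $E_1(d)=o(d^{\beta-1})$.
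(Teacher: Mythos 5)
Your setup (the implicit differentiation giving $\lvert g'(r)\rvert \asymp_\alpha d^\beta/r^{\beta+1}$, the dyadic decomposition in $r$, and the observation that a plain union bound only gives $O(d^{\beta-1})$) agrees with the paper, but the engine you propose for beating the union bound does not work, and this is a genuine gap. An elementary ``sweeping/total variation'' count controls $\#\{r\in(R,2R]:\{g(r)\}>1-\delta_R\}$ only through the bound $\delta_R R+\mathrm{TV}+O(1)$ with $\mathrm{TV}=\lvert g'\rvert R\asymp_\alpha d^\beta/R^\beta$ (not $\delta_R\cdot\mathrm{TV}$ as you write): the relevant set is a union of about $\mathrm{TV}$ short intervals, and each interval can capture one integer regardless of how short it is. When $\lvert g'\rvert\gg1$, consecutive integer samples of $g$ jump by more than a full period, so monotonicity alone tells you nothing about where $\{g(r)\}$ lands; this is precisely the regime where arithmetic input is required. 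Quantitatively, your fallback --- discard $r<R_0$ trivially, with $R_0$ chosen so that $\lvert g'\rvert\le1$, i.e.\ $R_0\asymp d^{\beta/(\beta+1)}$ --- costs $d^{\beta/(\beta+1)}$, and $\beta/(\beta+1)<\beta-1$ holds iff $\beta>(1+\sqrt{5})/2$, i.e.\ $\alpha<(1+\sqrt{5})/2\approx1.618$. So even after correcting the counting bound, this route provably cannot reach the stated range $\alpha<1+1/\sqrt{2}\approx1.707$; the claim that ``$\beta>\sqrt{2}$ is exactly what makes the exponents line up'' is not true for your argument.

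The missing idea is an equidistribution estimate for the finite sequence $(f_r^{-1}(d))_{R<r\le2R}$ itself. The paper bounds its discrepancy $D_1(d,R)$ via the Erd\H{o}s--Tur\'an--Koksma inequality (Lemma~\ref{ETK}) combined with van der Corput's second-derivative test (Lemma~\ref{2ndderiv}), using $\partial_r^2 f_r^{-1}(d)\asymp_\alpha d^\beta/r^{\beta+2}$ from Lemma~\ref{lemU2}; each dyadic block then contributes $\delta_R R+D_1(d,R)R$ with $D_1(d,R)R\ll_\alpha d^{\beta/3}R^{(1-\beta)/3}+d^{-\beta/2}R^{(\beta+2)/2}$. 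This allows the trivial cutoff to be taken at $R_0=d^{\gamma_1}$ with $\gamma_1$ just above $\beta/(\beta+2)$, and the condition $\beta/(\beta+2)<\beta-1$ is exactly $\beta>\sqrt{2}$, i.e.\ $\alpha<1+1/\sqrt{2}$ --- that is where the hypothesis actually enters. You would need to add this exponential-sum step (or an equivalent source of cancellation) to make the proof go through on the full stated range.
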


\begin{lemma}\label{lemU4'}
	Let $1+1/\sqrt{2}\le\alpha<(\sqrt{21}+4)/5$, $\beta=1/(\alpha-1)$ and $C_1>0$.
	Then $E_1(d)$ defined in Proposition~$\ref{limsup}$ is equal to $o(d^{\beta-1})$ as $d\to\infty$.
\end{lemma}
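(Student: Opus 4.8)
The plan is to reduce $E_1(d)$ to counting integer points in a thin strip about a curve and then to control that count by van der Corput's method; the feature of the present range, as opposed to the range $\alpha<1+1/\sqrt2$ of Lemma~\ref{lemU4}, is that $\beta$ is now so close to $1$ that a second-derivative estimate no longer saves enough, and a sharper exponential-sum bound is forced.

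First I would remove the small values of $r$. Writing $\delta_r=C_1d^{\beta-1}/r^\beta$, the condition $\{f_r^{-1}(d)\}>1-\delta_r$ holds automatically once $\delta_r\ge1$, i.e.\ once $r\le R_0:=(C_1d^{\beta-1})^{1/\beta}\asymp d^{(\beta-1)/\beta}$. Such $r$ contribute at most $R_0\asymp d^{1-1/\beta}$, and $1-1/\beta<\beta-1$ is equivalent to $(\beta-1)^2>0$, so this part is $o(d^{\beta-1})$. It remains to handle $R_0<r\le d^{1/\alpha}/4$.

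For those $r$ I would pass to a geometric form. With $m=\lceil{f_r^{-1}(d)}\rceil$ the condition reads $0<m-f_r^{-1}(d)<\delta_r$; applying the increasing map $f_r$ and the mean value theorem turns it into $0<f_r(m)-d<\delta_r f_r'(\xi)$ with $\xi$ near $f_r^{-1}(d)$. Using $f_r^{-1}(d)\sim\alpha^{-\beta}d^\beta r^{-\beta}$, the identity $f_r'(x)=\alpha[(x+r)^{\alpha-1}-x^{\alpha-1}]$ and $\alpha\beta=\beta+1$, one finds $\delta_r f_r'(f_r^{-1}(d))\asymp C_1\alpha^\beta/\beta$, a constant $C_3=C_3(\alpha,C_1)$; moreover $f_r'(f_r^{-1}(d))>C_3$ throughout $r>R_0$, so at most one integer $m$ is admissible for each $r$. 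Hence, up to the $O(d^{1-1/\beta})$ already accounted for, $E_1(d)$ is at most the number of integer pairs $(m,r)$ with $R_0<r\le d^{1/\alpha}/4$ and $(m+r)^\alpha-m^\alpha\in(d,d+C_3]$, i.e.\ the number of lattice points in a strip of bounded width about the curve $(x+r)^\alpha-x^\alpha=d$.

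Finally I would estimate this count. Splitting the $r$-range dyadically and applying the Erd\H{o}s--Tur\'an inequality on each block $r\asymp R$, the count separates into a main term comparable to $\sum_r\delta_r$ and a discrepancy term governed by the exponential sums $\sum_{r\asymp R}e\bigl(h\,f_r^{-1}(d)\bigr)$. Since $\sum_{r>R_0}\delta_r\asymp d^{1-1/\beta}=o(d^{\beta-1})$, everything rests on the discrepancy. On $r\asymp R$ the derivatives of $r\mapsto f_r^{-1}(d)$ have size $\asymp d^\beta R^{-\beta-j}$, and I would bound the sums by a van der Corput derivative test, choosing the order of differencing and optimizing the truncation length at each scale. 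The \textbf{main obstacle} is exactly this optimization: because $\beta\le\sqrt2$, the second-derivative test that suffices for Lemma~\ref{lemU4} ceases to beat $d^{\beta-1}$ near the lower end of the $r$-range, so one must invoke a higher-order test (equivalently a stronger exponent pair); the demand that the resulting error, summed over all dyadic scales and not merely at the endpoints, stay below $d^{\beta-1}$ reduces after optimization to $4\beta^2-2\beta-5>0$, that is $\beta>(\sqrt{21}+1)/4$, equivalently $\alpha<(\sqrt{21}+4)/5$. Verifying that this saving holds uniformly across the whole range $1+1/\sqrt2\le\alpha<(\sqrt{21}+4)/5$ is the crux of the argument.
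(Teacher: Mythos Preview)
Your plan is essentially the paper's: reduce $E_1(d)$ via dyadic decomposition and the Erd\H{o}s--Tur\'an inequality to exponential sums $\sum_{r\asymp R}e\bigl(hf_r^{-1}(d)\bigr)$, then control these by derivative tests, with a higher-order test replacing the second-derivative test on the low end of the $r$-range. The paper makes this explicit by splitting the range into three pieces at cutoffs $d^{\gamma_2}<d^{\gamma_1}$: on $d^{\gamma_1}<r\le d^{1/\alpha}/4$ the second-derivative test (Lemma~\ref{2ndderiv} with Lemma~\ref{lemU2}) still suffices, while on $d^{\gamma_2}<r\le d^{\gamma_1}$ it invokes the Sargos--Gritsenko \emph{third}-derivative test (Lemma~\ref{3rdderiv}), using the explicit computation of $y'''$ in Lemma~\ref{lemU3}. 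Your condition $4\beta^2-2\beta-5>0$ is exactly the paper's inequality $(3-2\beta)/(\beta-1)<(6\beta-7)/(4-\beta)$ that permits a valid choice of $\gamma_1$, so you have correctly located the threshold.

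Two small remarks. First, your passage through the lattice-point count $(m+r)^\alpha-m^\alpha\in(d,d+C_3]$ is a harmless detour: you end up back at the same sums in $f_r^{-1}(d)$, and the paper goes there directly. Second, your trivial cutoff $R_0\asymp d^{(\beta-1)/\beta}$ is actually below the threshold $d^{\beta/(\beta+3)}$ needed for the optimal truncation $H$ in the third-derivative estimate to be $\ge1$ (since $\beta\le\sqrt2<3/2$ here gives $(\beta-1)/\beta<\beta/(\beta+3)$); in practice you must raise the trivial range to some $d^{\gamma_2}$ with $\beta/(\beta+3)<\gamma_2<\beta-1$, exactly as the paper does.
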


\begin{lemma}\label{lemU5}
	Let $1<\alpha<(\sqrt{10}+2)/3$, $\beta=1/(\alpha-1)$ and $C_2>0$.
	Then $E_2(d)$ defined in Proposition~$\ref{limsup}$ is equal to $o(d^{\beta-1})$ as $d\to\infty$.
\end{lemma}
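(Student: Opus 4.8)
The plan is to treat $E_2(d)$ as a question about the distribution modulo $1$ of the values of the smooth function $\gamma(x):=(x^\alpha+d)^{1/\alpha}$. Writing $e(t):=e^{2\pi it}$ and $\delta:=2d^{1/\alpha-1}\to0$, one has straight from the definition
\[
E_2(d)=\#\bigl\{\,n\in\mathbb{N}:\ n<C_2d^{1/\alpha},\ \{\gamma(n)\}\in(1-\delta,1)\,\bigr\}.
\]
(Geometrically this counts integer points $(n,m)$ in the thin strip $d<m^\alpha-n^\alpha\le d+O_\alpha(1)$ around the convex arc $m^\alpha-n^\alpha=d$, $n<C_2d^{1/\alpha}$; by the mean value theorem that strip has height $O(d^{1/\alpha-1})$ and, for $d$ large, meets every vertical line in at most one lattice point, so its ``expected'' population is $\asymp d^{1/\alpha-1}\cdot d^{1/\alpha}=d^{2/\alpha-1}$, which is already $o(d^{\beta-1})$ because $\alpha<2$ forces $2/\alpha-1<\beta-1$. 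The whole point is to control the discrepancy error in this heuristic, and this is where the exponent $(\sqrt{10}+2)/3$ enters.)

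I would decompose the range of $n$ dyadically --- let $N$ run through the powers of $2$ with $N<C_2d^{1/\alpha}$, and consider $n\in[N,2N)$ --- and apply the Erd\H{o}s--Tur\'an inequality on each block: for every integer $H\ge1$,
\[
\#\{\,n\in[N,2N):\{\gamma(n)\}\in(1-\delta,1)\,\}\ll N\delta+\frac NH+\sum_{1\le h\le H}\frac1h\Bigl|\sum_{N\le n<2N}e(h\gamma(n))\Bigr|.
\]
The dyadic cut is needed because $\gamma''(x)=(\alpha-1)\,d\,x^{\alpha-2}(x^\alpha+d)^{1/\alpha-2}$ has constant sign and, since $N<C_2d^{1/\alpha}$ forces $N^\alpha\ll_\alpha d$ and hence $x^\alpha+d\asymp d$ on $[N,2N)$, satisfies $\gamma''(x)\asymp_\alpha N^{\alpha-2}d^{1/\alpha-1}$ uniformly on the block. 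Van der Corput's second-derivative test therefore gives
\[
\Bigl|\sum_{N\le n<2N}e(h\gamma(n))\Bigr|\ll_\alpha N\bigl(hN^{\alpha-2}d^{1/\alpha-1}\bigr)^{1/2}+\bigl(hN^{\alpha-2}d^{1/\alpha-1}\bigr)^{-1/2}.
\]
Inserting this (and using $\sum_{h\le H}h^{-1/2}\ll H^{1/2}$ and $\sum_{h\le H}h^{-3/2}\ll1$), then choosing $H=H_N$ to balance $N/H$ against the contribution of the first van der Corput term, each block is seen to contribute
\[
\ll_\alpha N\delta+N^{(1+\alpha)/3}d^{(1-\alpha)/(3\alpha)}+N^{(2-\alpha)/2}d^{(\alpha-1)/(2\alpha)}.
\]

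It then remains to sum over the $O(\log d)$ dyadic scales. Each of the three expressions is a geometric progression in $N$ with positive exponent, hence controlled by the top scale $N\asymp d^{1/\alpha}$, yielding $\sum_N N\delta\ll d^{2/\alpha-1}$, $\sum_N N^{(1+\alpha)/3}d^{(1-\alpha)/(3\alpha)}\ll d^{2/(3\alpha)}$, and $\sum_N N^{(2-\alpha)/2}d^{(\alpha-1)/(2\alpha)}\ll d^{1/(2\alpha)}$. Thus $E_2(d)\ll_\alpha d^{2/\alpha-1}+d^{2/(3\alpha)}+d^{1/(2\alpha)}$. Here $2/\alpha-1<\beta-1$ for every $\alpha<2$, and $1/(2\alpha)<\beta-1$ is equivalent to $2\alpha^2-3\alpha-1<0$, true throughout the stated range; the decisive requirement is $2/(3\alpha)<\beta-1=(2-\alpha)/(\alpha-1)$, which on clearing denominators is $2(\alpha-1)<3\alpha(2-\alpha)$, i.e.\ $3\alpha^2-4\alpha-2<0$, i.e.\ $\alpha<(2+\sqrt{10})/3=(\sqrt{10}+2)/3$ --- exactly the hypothesis of the lemma. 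Hence $E_2(d)=o(d^{\beta-1})$.

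The part I expect to be the real work is the dyadic bookkeeping: verifying that $\gamma''$ is genuinely $\asymp_\alpha N^{\alpha-2}d^{1/\alpha-1}$ across each block (this is precisely where the cutoff $n<C_2d^{1/\alpha}$ built into Proposition~\ref{limsup} is used), choosing $H_N$ correctly, and being careful with the few smallest blocks, on which van der Corput's bound is weaker than the trivial count $\le 2N$; keeping the (lossy) van der Corput bound there is harmless, since those blocks contribute only $\ll d^{1/(2\alpha)}$ in total, already absorbed above. Everything else --- the derivative computation for $\gamma$, the elementary passage between ``$\{\gamma(n)\}$ close to $1$'' and the lattice-point strip, and the geometric summations --- is routine.
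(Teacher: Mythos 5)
Your proposal is correct and follows essentially the same route as the paper: dyadic decomposition of the range $n<C_2d^{1/\alpha}$, the Erd\H{o}s--Tur\'an--Koksma inequality on each block, van der Corput's second-derivative test with $\gamma''\asymp_\alpha d^{1/\alpha-1}N^{\alpha-2}$, the balancing choice $H\asymp d^{(\alpha-1)/3\alpha}N^{(2-\alpha)/3}$, and the final comparison $2/(3\alpha)<\beta-1\iff\alpha<(\sqrt{10}+2)/3$. Nothing further is needed.
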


The above propositions and lemmas are proved in Sections~\ref{proof1} and \ref{proof2}.

\subsection{Statements in order to prove Theorem~$\ref{main2}$}

To prove Theorem~\ref{main2}, we extend the definition of $\cN_\alpha(d)$ a little in another way.
For a real number $\alpha>1$ and integers $d,l\ge1$, 
define the number $\cN_\alpha^{\ge l}(d)$ as 
\[
\cN_\alpha^{\ge l}(d)
= \#\{ (n,r)\in\mathbb{N}^2 : r\ge l,\ \lfloor{(n+r)^\alpha}\rfloor - \lfloor{n^\alpha}\rfloor = d \}.
\]
If $l=1$, then $\cN_\alpha^{\ge l}(d)=\cN_{\alpha,2}(d)=\cN_\alpha(d)$.
Theorem~\ref{main2}, which is proved in Section~\ref{AE}, is derived from the following proposition and lemmas.

\begin{proposition}\label{UB}
	Let $1<\alpha<2$ and $\beta=1/(\alpha-1)$.
	Then, for all integers $d,l\ge1$, 
	\[
	\cN_\alpha^{\ge l}(d) \ll_\alpha d^{\beta-1}l^{1-\beta} + d^{2/\alpha-1} + E_1^{\ge l}(d-1) + E_2(d-1),
	\]
	where $E_2(d)$ is defined in Proposition~$\ref{limsup}$, and $E_1^{\ge l}(d)$ is defined as 
	\[
	E_1^{\ge l}(d) = \#\bigl\{ r\in\mathbb{N} : l\le r\le d^{1/\alpha}/4,\ \{f_r^{-1}(d)\} + C_1d^{\beta-1}/r^\beta > 1 \bigr\}
	\]
	with the same function $f_r$ and the same constant $C_1$ as in Proposition~$\ref{limsup}$.
\end{proposition}

\begin{lemma}\label{lemU6}
	Let $1<\alpha\le3/2$ and $\beta=1/(\alpha-1)$.
	Then, for all integers $d,l\ge1$, 
	\[
	E_1^{\ge l}(d) \ll_\alpha d^{\beta-1}l^{1-\beta} + d^{\beta/3}l^{(1-\beta)/3} + d^{2/\alpha-1},
	\]
	where $E_1^{\ge l}(d)$ is defined in Proposition~$\ref{UB}$.
\end{lemma}

\begin{lemma}\label{lemU7}
	Let $1<\alpha\le4/3$ and $\beta=1/(\alpha-1)$.
	Then, for all integers $d\ge1$, 
	\[
	E_2(d) \ll_\alpha d^{2/\alpha-1},
	\]
	where $E_2(d)$ is defined in Proposition~$\ref{limsup}$.
\end{lemma}

The above proposition and lemmas are proved in Section~\ref{AE}.

\section{Equidistribution modulo $1$ and exponential sums}

The following lemma is a key point in proving Propositions~\ref{liminf} and \ref{limsup}.

\begin{lemma}[Equidistribution modulo $1$]\label{lemWeyl}
	Let $1<\alpha<2$, $\beta=1/(\alpha-1)$, $r\in\mathbb{N}$, 
	$c_1<c_2$ and $c_2-c_1\in\mathbb{N}$.
	Then, for every convex set $\cC$ of $[0,1)^2$, 
	\begin{align*}
		\lim_{d\to\infty} \frac{1}{d^{\beta-1}}
		\#\Biggl\{ n\in\mathbb{N} : 
		\begin{array}{c}
			\displaystyle \Bigl( \frac{d+c_1}{r\alpha} \Bigr)^\beta \le n < \Bigl( \frac{d+c_2}{r\alpha} \Bigr)^\beta,\vspace{1ex}\\
			(\{n^\alpha\},\{r\alpha n^{\alpha-1}\})\in\cC
		\end{array}
		\Biggr\}\\
		= \frac{\beta(c_2-c_1)}{(r\alpha)^\beta}\mu(\cC),
	\end{align*}
	where $\mu$ denotes the Lebesgue measure on $\mathbb{R}^2$.
\end{lemma}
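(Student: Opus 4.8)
The plan is to apply a two-dimensional Weyl-type equidistribution criterion to the sequence of vectors $(n^\alpha, r\alpha n^{\alpha-1}) \bmod 1$, with $n$ ranging over the window $I_d = [((d+c_1)/(r\alpha))^\beta, ((d+c_2)/(r\alpha))^\beta)$. First I would compute the length of this window: since $\beta = 1/(\alpha-1) > 1$ and the endpoints differ by a bounded amount $c_2-c_1$ inside a large power, the mean value theorem gives $|I_d| = \beta (c_2-c_1)(r\alpha)^{-\beta} d^{\beta-1} + O(d^{\beta-2})$, so $\#(I_d \cap \mathbb{N}) \sim \beta(c_2-c_1)(r\alpha)^{-\beta} d^{\beta-1}$. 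Thus the claimed limit is exactly $\mu(\cC)$ times the asymptotic count of integers in the window, which is precisely what equidistribution of $(\{n^\alpha\},\{r\alpha n^{\alpha-1}\})$ in $[0,1)^2$ would yield once we pass from boxes to convex sets.

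The core step is to establish that $(\{n^\alpha\}, \{r\alpha n^{\alpha-1}\})_{n \in I_d}$ is equidistributed in $[0,1)^2$ as $d \to \infty$, uniformly enough that restricting $n$ to the moving window does not destroy it. By the Weyl criterion it suffices to show that for every nonzero $(h,k) \in \mathbb{Z}^2$,
\[
\frac{1}{|I_d|} \sum_{n \in I_d} e\bigl( h n^\alpha + k r\alpha n^{\alpha-1} \bigr) \longrightarrow 0 \qquad (d \to \infty),
\]
where $e(t) = e^{2\pi i t}$. The phase $\phi(x) = h x^\alpha + k r\alpha x^{\alpha-1}$ has second derivative $\phi''(x) = h\alpha(\alpha-1)x^{\alpha-2} + kr\alpha(\alpha-1)(\alpha-2)x^{\alpha-3}$. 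On the window $I_d$ we have $x \asymp_{r,\alpha,c_1,c_2} d^{\beta-1}$, so the dominant term is $h\alpha(\alpha-1)x^{\alpha-2}$ when $h \neq 0$, giving $|\phi''(x)| \asymp |h| d^{(\beta-1)(\alpha-2)}$; note $(\beta-1)(\alpha-2) < 0$ since $1<\alpha<2$ forces $\beta-1>0$ and $\alpha-2<0$. Applying van der Corput's second-derivative estimate (the standard bound $\sum_{n\in I} e(\phi(n)) \ll |I|\,\lambda^{1/2} + \lambda^{-1/2}$ when $|\phi''| \asymp \lambda$ on $I$) with $\lambda \asymp |h| d^{(\beta-1)(\alpha-2)} \to 0$, we get the exponential sum is $\ll_{h,r,\alpha} |I_d| \lambda^{1/2} + \lambda^{-1/2} = o(|I_d|)$ provided $\lambda^{-1/2} = o(|I_d|)$, i.e. $d^{-(\beta-1)(\alpha-2)/2} = o(d^{\beta-1})$, which reduces to $-(\beta-1)(\alpha-2)/2 < \beta-1$, i.e. $2-\alpha < 2$, always true. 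When $h = 0$ but $k \neq 0$, the phase is $kr\alpha x^{\alpha-1}$ with $\phi''(x) = kr\alpha(\alpha-1)(\alpha-2)x^{\alpha-3} \asymp |k| d^{(\beta-1)(\alpha-3)} \to 0$, and the same van der Corput argument applies. Hence every nontrivial Weyl sum is $o(|I_d|)$, so the pair is equidistributed modulo $1$ along the window.

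The final step is the passage from rectangular boxes to arbitrary convex subsets $\cC$ of $[0,1)^2$: since equidistribution gives the correct asymptotic count for every box, and any bounded convex set can be sandwiched between a finite union of small boxes from inside and from outside with the gap having arbitrarily small Lebesgue measure (the boundary of a planar convex set has measure zero, and its $\epsilon$-neighborhood has measure $O(\epsilon)$ by bounded perimeter), a standard approximation argument upgrades box-equidistribution to $\mu(\cC) \cdot \#(I_d\cap\mathbb{N})$ for convex $\cC$; dividing by $d^{\beta-1}$ and using the window count computed in the first paragraph gives the stated limit $\beta(c_2-c_1)(r\alpha)^{-\beta}\mu(\cC)$. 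The main obstacle is bookkeeping in the van der Corput step: one must verify that the implied constants, while depending on $h,k,r$, do not depend on $d$, and that the lower-order term of $\phi''$ never cancels the main term on the window (which holds because the ratio of the two terms in $\phi''$ is $O(1/x) \to 0$); none of this is deep, but it is where care is needed.
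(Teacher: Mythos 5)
There is a genuine gap, and it is hidden by a computational slip. On the window $I_d=[((d+c_1)/(r\alpha))^\beta,((d+c_2)/(r\alpha))^\beta)$ the variable $n$ is of size $\asymp d^{\beta}$, not $d^{\beta-1}$; the quantity $d^{\beta-1}$ is the \emph{length} of the window, not its location. With the correct size, your treatment of the case $h=0$, $k\neq0$ collapses. There the phase is $\phi(x)=kr\alpha x^{\alpha-1}$, so $|\phi''(x)|\asymp|k|\,d^{\beta(\alpha-3)}=|k|\,d^{1-2\beta}$, and van der Corput gives
$\sum_{n\in I_d}e(\phi(n))\ll |I_d|\lambda^{1/2}+\lambda^{-1/2}\asymp d^{-1/2}+d^{\beta-1/2}$.
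Since $d^{\beta-1/2}\gg d^{\beta-1}=|I_d|$, this bound is worse than trivial: the second derivative of $\phi$ is far too small relative to the shortness of the window for the second-derivative test to detect any cancellation. (Your exponent $(\beta-1)(\alpha-3)$, coming from the misplaced window, is what made the case appear to go through.) The case $h\neq0$ is fine and matches the paper, which also uses the second-derivative test there, where $|\phi''|\asymp d^{\beta(\alpha-2)}=d^{1-\beta}$ and both terms of the van der Corput bound are $O(d^{(\beta-1)/2})=o(|I_d|)$.

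The telltale sign is that your argument never uses the hypothesis $c_2-c_1\in\mathbb{N}$, which is essential: across the window the second coordinate $r\alpha n^{\alpha-1}$ increases by roughly $(N-M)\,r\alpha(\alpha-1)M^{\alpha-2}\to c_2-c_1$, i.e.\ it makes asymptotically $c_2-c_1$ full revolutions modulo $1$, and if that number were not an integer the fractional parts $\{r\alpha n^{\alpha-1}\}$ would not equidistribute on so short a window. The paper's proof handles $h_1=0$, $h_2\neq0$ by a completely different device: it linearizes, writing $f'(n)=f'(M)+(n-M)f''(M)+O((n-M)^2|f'''|)$ with a total error $O(1/d)$, and then sums the resulting geometric progression exactly, obtaining modulus $\bigl|\sin\bigl(\pi h_2 r(N-M)f''(M)\bigr)/\sin\bigl(\pi h_2 rf''(M)\bigr)\bigr|$; the denominator times $N-M$ is $\gg1$, and the numerator tends to $\sin(\pi h_2(c_2-c_1))=0$ precisely because $c_2-c_1\in\mathbb{Z}$. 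Some argument of this kind, exploiting the integrality of $c_2-c_1$, is what your proposal is missing; without it the Weyl criterion cannot be verified for the frequencies $(0,h_2)$.
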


Recently, Saito and Yoshida \cite[Lemma~5.7]{SY2} investigated 
the distribution of the sequence $\bigl( (n^\alpha,r\alpha n^{\alpha-1}) \bigr)_{n=1}^\infty$ modulo $1$ on short intervals.
However, we cannot use their lemma here because the length 
\[
\Bigl( \frac{d+c_2}{r\alpha} \Bigr)^\beta - \Bigl( \frac{d+c_1}{r\alpha} \Bigr)^\beta
\sim \frac{\beta(c_2-c_1)}{(r\alpha)^\beta}d^{\beta-1}
\]
of the interval in Lemma~\ref{lemWeyl} is too short.
Nevertheless, the above equidistribution holds due to the assumption $c_2-c_1\in\mathbb{N}$.
We prove Lemma~\ref{lemWeyl} at the end of this section.

To prove Lemma~\ref{lemWeyl}, we need to estimate exponential sums.
Denote by $e(x)$ the function $e^{2\pi ix}$, 
and by $|\cI|$ the length of an interval $\cI$ of $\mathbb{R}$.
The following lemmas are useful to estimate exponential sums.

\begin{lemma}[van der Corput]\label{2ndderiv}
	Let $\cI$ be an interval of $\mathbb{R}$ with $|\cI|\ge1$ and 
	$f\colon \cI\to\mathbb{R}$ be a $C^2$ function, 
	and let $c\ge1$.
	If $\lambda_2>0$ satisfies that 
	\[
	\lambda_2 \le |f''(x)| \le c\lambda_2
	\] 
	for all $x\in\cI$, then 
	\[
	\sum_{n\in\cI\cap\mathbb{Z}} e(f(n))
	\ll_c \abs{\cI}\lambda_2^{1/2} + \lambda_2^{-1/2}.
	\]
\end{lemma}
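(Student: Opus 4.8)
The plan is to deduce this classical estimate (van der Corput's second-derivative test) from the first-derivative, or Kuzmin--Landau, bound, after cutting $\cI$ into pieces on which $f'$ behaves predictably. First I would make two harmless reductions. Since $f''$ is continuous and $\abs{f''}\ge\lambda_2>0$, its sign is constant on $\cI$; replacing $f$ by $-f$ if necessary, which only conjugates the sum $\sum_n e(f(n))$, we may assume $f''>0$, so that $f'$ is strictly increasing on $\cI$. Also, if $\lambda_2\ge1/4$ the trivial bound already suffices, as $\bigl|\sum_{n\in\cI\cap\mathbb Z}e(f(n))\bigr|\le\#(\cI\cap\mathbb Z)\le\abs{\cI}+1\le2\abs{\cI}\ll\abs{\cI}\lambda_2^{1/2}$ by $\abs{\cI}\ge1$. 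So we may assume $0<\lambda_2<1/4$; set $\theta=\lambda_2^{1/2}\in(0,1/2)$ and write $\lVert t\rVert$ for the distance from $t$ to the nearest integer.

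Next I would use (or reprove by partial summation) the first-derivative test: if $g$ is real and $C^1$ on an interval $\cJ$, $g'$ is monotone on $\cJ$, and $\lVert g'(x)\rVert\ge\theta$ for all $x\in\cJ$, then $\sum_{n\in\cJ\cap\mathbb Z}e(g(n))\ll\theta^{-1}$. The point is that such a $g'$ must stay in a single gap $(m+\theta,m+1-\theta)$ with $m\in\mathbb Z$ --- being continuous, monotone, and never within $\theta$ of an integer --- so after subtracting $mn$ from $g(n)$, which does not change $e(g(n))$, one may take $\theta\le g'\le1-\theta$; then writing $e(g(n))=\bigl(e(g(n+1))-e(g(n))\bigr)/\bigl(e(g(n+1)-g(n))-1\bigr)$ and applying Abel summation, with $\abs{e(t)-1}^{-1}=(2\sin\pi t)^{-1}\le(2\sin\pi\theta)^{-1}$ and the monotonicity of $n\mapsto g(n+1)-g(n)$, gives the bound $\ll(\sin\pi\theta)^{-1}\ll\theta^{-1}$.

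Then I would partition $\cI$ at the finitely many points where $f'$ equals $m+\theta$ or $m-\theta$ for some $m\in\mathbb Z$ (these points occur in increasing order since $f'$ does). On each resulting ``good'' piece, $f'$ lies in one gap $(m+\theta,m+1-\theta)$, so $\lVert f'\rVert\ge\theta$ there and the first-derivative test bounds that partial sum by $\ll\theta^{-1}=\lambda_2^{-1/2}$. On a ``bad'' piece, $f'$ stays within $\theta$ of a fixed integer; as $f'$ is increasing with $f''\ge\lambda_2$, this piece has length at most $2\theta/\lambda_2=2\lambda_2^{-1/2}$, hence carries at most $\ll\lambda_2^{-1/2}+1\ll\lambda_2^{-1/2}$ integers, which I bound trivially. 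Since $f'$ increases over $\cI$ by $\Delta:=\int_{\cI}f''\le c\lambda_2\abs{\cI}$, the numbers of good and of bad pieces are each $\ll\Delta+1\ll_c\lambda_2\abs{\cI}+1$, and summing over pieces yields
\[
\sum_{n\in\cI\cap\mathbb Z}e(f(n))\ll_c(\lambda_2\abs{\cI}+1)\lambda_2^{-1/2}=\abs{\cI}\lambda_2^{1/2}+\lambda_2^{-1/2},
\]
which is the assertion.

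I would not expect a genuine obstacle here --- this is the classical second-derivative test, which could equally well just be cited --- so the only real content is the first-derivative test, and the one thing to watch is the bookkeeping of the partition: that each bad piece is genuinely short thanks to $f''\ge\lambda_2$, that good and bad pieces interlace so both counts are $\ll_c\lambda_2\abs{\cI}+1$, and that the degenerate ranges $\lambda_2\gtrsim1$ and $\abs{\cI}\asymp1$ are absorbed into the trivial bound. One could avoid the first-derivative test by applying van der Corput's inequality (Weyl differencing) to $z_n=e(f(n))$ and estimating the shifted sums $\sum_n e(f(n+h)-f(n))$, whose phases have derivative $\asymp h\lambda_2$; this gives the same bound with more effort, so I would not take that route.
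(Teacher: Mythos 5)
Your proof is correct. The paper does not prove this lemma at all --- it simply cites \cite[Theorem~2.2]{GK} --- and the argument you give (reduce to $f''>0$, dispose of $\lambda_2\gg1$ trivially, then partition $\cI$ according to whether $f'$ is within $\theta=\lambda_2^{1/2}$ of an integer, applying the Kuzmin--Landau first-derivative test on the good pieces and the trivial bound on the short bad pieces) is precisely the standard proof found in that reference, with the bookkeeping of piece counts and degenerate ranges handled correctly.
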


\begin{lemma}[Sargos--Gritsenko]\label{3rdderiv}
	Let $\cI$ be an interval of $\mathbb{R}$ with $|\cI|\ge1$ and 
	$f\colon \cI\to\mathbb{R}$ be a $C^3$ function, 
	and let $c\ge1$.
	If $\lambda_3>0$ satisfies that 
	\[
	\lambda_3 \le |f'''(x)| \le c\lambda_3
	\]
	for all $x\in\cI$, then 
	\[
	\sum_{n\in\cI\cap\mathbb{Z}} e(f(n))
	\ll_c \abs{\cI}\lambda_3^{1/6} + \lambda_3^{-1/3}.
	\]
\end{lemma}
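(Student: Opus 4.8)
The plan is to reduce the third-derivative estimate to the classical van der Corput second-derivative bound (Lemma~\ref{2ndderiv}) by a dyadic-in-$\lambda_3$ splitting of the interval $\cI$ together with the derivative-exchange trick. First I would record the heuristic: if $|f'''|\asymp\lambda_3$ on $\cI$, then on a subinterval $\cJ\subseteq\cI$ of length $H$ one has, for an appropriately chosen reference point, $|f''(x)|\ll\lambda_3 H$ on $\cJ$, so van der Corput gives $\sum_{n\in\cJ}e(f(n))\ll H(\lambda_3 H)^{1/2}+(\lambda_3 H)^{-1/2}$ provided a genuine two-sided bound $\mu_2\le|f''|\le c\mu_2$ holds on $\cJ$. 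Summing over $\asymp|\cI|/H$ subintervals yields $\ll|\cI|(\lambda_3 H)^{1/2}+(|\cI|/H)(\lambda_3 H)^{-1/2}$, and optimising by taking $H\asymp\lambda_3^{-1/3}$ produces the two terms $|\cI|\lambda_3^{1/6}$ and $\lambda_3^{-1/3}$ as desired; the constraint $H\le|\cI|$ is harmless since if $\lambda_3^{-1/3}>|\cI|$ the whole sum is trivially $\ll|\cI|\le\lambda_3^{-1/3}$.

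The key steps, in order: (1) Reduce to $|\cI|\ge\lambda_3^{-1/3}$, otherwise bound the sum trivially by $|\cI|+1\ll\lambda_3^{-1/3}$. (2) Partition $\cI$ into $M\asymp|\cI|\lambda_3^{1/3}$ subintervals $\cJ_m$ each of length $H\asymp\lambda_3^{-1/3}$ (with $|\cJ_m|\ge1$, which is exactly where $\lambda_3^{-1/3}\ge1$, i.e. $\lambda_3\le1$, would be needed---otherwise one must handle $\lambda_3>1$ by instead splitting into unit intervals, which already gives $\ll|\cI|$, and note $|\cI|\le|\cI|\lambda_3^{1/6}$; I would treat the two regimes $\lambda_3\le1$ and $\lambda_3>1$ separately at the outset). (3) On each $\cJ_m$, since $f''$ is $C^1$ with $|f'''|\asymp\lambda_3$, it is monotone, so $f''$ is monotone on $\cJ_m$ and the total variation of $f''$ across $\cJ_m$ is $\asymp\lambda_3 H\asymp\lambda_3^{2/3}$. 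Here is the real subtlety: to apply Lemma~\ref{2ndderiv} one needs a \emph{lower} bound $|f''|\ge\mu_2$, not merely the oscillation size. I would handle this by a further subdivision of each $\cJ_m$ at the (at most one) point where $f''$ vanishes, and on the piece where $|f''|$ is small---say $|f''|\le\lambda_3^{2/3}$---one cannot use the second-derivative test; instead apply Lemma~\ref{2ndderiv} directly to $\cJ_m$ treating $\lambda_2=\lambda_3^{2/3}$ as the upper bound and using a dyadic decomposition on $|f''|$: write $\cJ_m=\bigcup_j\cJ_{m,j}$ where $2^{-j}\lambda_3^{2/3}\le|f''|\le 2^{-j+1}\lambda_3^{2/3}$ on $\cJ_{m,j}$, apply Lemma~\ref{2ndderiv} with $\lambda_2=2^{-j}\lambda_3^{2/3}$ and $c=2$ to each nonempty $\cJ_{m,j}$, and sum the resulting geometric series in $j$. (4) Collect the bounds over $j$ and $m$, using that $\sum_m|\cJ_m|=|\cI|$, to arrive at $\ll|\cI|\lambda_3^{1/6}+\lambda_3^{-1/3}$ with the implied constant depending only on $c$.

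The main obstacle is Step~(3): passing from ``$f''$ has small total variation on a short interval'' to ``$f''$ is pinned between $\mu_2$ and $c\mu_2$'' requires dealing with the region near a zero of $f''$, and naively van der Corput's $\lambda_2^{-1/2}$ term blows up there. The dyadic-in-$|f''|$ decomposition fixes this because the short subintervals where $|f''|$ is very small are correspondingly few (the monotonicity of $f''$ forces $|\cJ_{m,j}|\ll 2^{-j}\lambda_3^{2/3}/\lambda_3=2^{-j}\lambda_3^{-1/3}$), so the bad term $\lambda_2^{-1/2}=2^{j/2}\lambda_3^{-1/3}$ is controlled by requiring $|\cJ_{m,j}|\ge1$, i.e. only $j\le j_0$ with $2^{j_0}\asymp\lambda_3^{2/3}$ contribute with that subinterval structure, while for the single residual interval of length $<1$ one uses the trivial bound. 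Carefully bookkeeping these finitely many dyadic ranges is routine but is where all the care lies; everything else is a direct application of Lemma~\ref{2ndderiv}.
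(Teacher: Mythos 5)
The paper does not actually prove this lemma: it is quoted from Sargos and Gritsenko, and for good reason, because it is not a corollary of the second derivative test. Your proposal has a fatal gap at its very first step. The hypothesis $\lambda_3\le|f'''|\le c\lambda_3$ controls the \emph{variation} of $f''$, not its size: since $f'''$ has constant sign, $f''$ is strictly monotone with at most one zero $x_0$ in (or near) $\cI$, and $|f''(x)|\asymp\lambda_3|x-x_0|$. On a subinterval $\cJ_m$ of length $H$ at distance $\Delta_m$ from $x_0$ one therefore has $|f''|\asymp\lambda_3\Delta_m$, which can be as large as $\lambda_3|\cI|$; your claim that $|f''|\ll\lambda_3H$ on $\cJ_m$ is false except for the one or two subintervals containing or adjacent to $x_0$. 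Carrying out your decomposition honestly, the main terms from Lemma~\ref{2ndderiv} sum to $\sum_m H(\lambda_3\Delta_m)^{1/2}\asymp\lambda_3^{1/2}|\cI|^{3/2}$, which is $\ll|\cI|\lambda_3^{1/6}$ only when $|\cI|\le\lambda_3^{-2/3}$. A concrete failure: $f(x)=\lambda_3x^3/6$ on $\cI=[N,2N]$ with $N=\lambda_3^{-1}$ has $f''\in[1,2]$ throughout, so every application of the second derivative test on every subinterval sees $\lambda_2\asymp1$ and your total is $\gg N$, i.e.\ no better than trivial, whereas the lemma asserts $\ll N\lambda_3^{1/6}=\lambda_3^{-5/6}$.

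The part of your argument near the zero of $f''$ (the dyadic decomposition in $|f''|$, which produces the $\lambda_3^{-1/3}$ term) is sound, but it only addresses the secondary term. The term $|\cI|\lambda_3^{1/6}$ cannot be reached by partitioning plus Lemma~\ref{2ndderiv}; the classical route is Weyl--van der Corput differencing (the $A$-process), which applies the second derivative test to the differenced functions $g_q=f(\cdot+q)-f(\cdot)$ with $g_q''\asymp q\lambda_3$ and yields $|\cI|\lambda_3^{1/6}+|\cI|^{1/2}\lambda_3^{-1/6}$. Even that is weaker than the statement here: replacing the secondary term $|\cI|^{1/2}\lambda_3^{-1/6}$ by $\lambda_3^{-1/3}$ in the range $|\cI|\ge\lambda_3^{-1/3}$ is precisely the nontrivial content of Sargos' and Gritsenko's theorems, proved by substantially more elaborate arguments. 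If you want a self-contained treatment you must either reproduce one of those proofs or verify that the weaker classical bound suffices for the application in Section~6, which is not automatic since the lemma is applied there to sums whose length can lie in the critical range between $\lambda_3^{-1/3}$ and $\lambda_3^{-2/3}$.
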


Lemma~\ref{2ndderiv} is called the second derivative test; 
for its proof, see \cite[Theorem~2.2]{GK} for instance.
Lemma~\ref{3rdderiv} was shown by Sargos \cite{Sargos} and Gritsenko \cite{Gritsenko} independently.
Now, let us prove Lemma~\ref{lemWeyl} by using Lemma~\ref{2ndderiv}.

\begin{proof}[Proof of Lemma~$\ref{lemWeyl}$]
	Define the function $f$ as $f(x)=x^\alpha$.
	If the following criterion holds, 
	Lemma~\ref{lemWeyl} follows in the same way as Weyl's equidistribution theorem.
	\textit{Weyl's criterion}: for every non-zero $(h_1,h_2)\in\mathbb{Z}^2$, 
	\begin{equation}
		\lim_{d\to\infty} \frac{1}{N-M}\sum_{n=M}^{N-1} e(h_1 f(n) + h_2 rf'(n))
		= 0, \label{eqWeyl}
	\end{equation}
	where 
	\[
	M=M(d) \coloneqq \Bigl\lceil{\Bigl( \frac{d+c_1}{r\alpha} \Bigr)^\beta}\Bigr\rceil \quad\text{and}\quad
	N=N(d) \coloneqq \Bigl\lceil{\Bigl( \frac{d+c_2}{r\alpha} \Bigr)^\beta}\Bigr\rceil.
	\]
	First, we show \eqref{eqWeyl} when $h_1=0$ and $h_2\not=0$.
	Let $h_1=0$, and let $h_2\not=0$ be an integer.
	Then 
	\begin{align}
		&\quad \abs{\frac{1}{N-M}\sum_{n=M}^{N-1} e(h_2 rf'(n))}\nonumber\\
		\begin{split}
			&\le \frac{1}{N-M}\abs{\sum_{n=M}^{N-1} e(h_2 rf'(n))
			- \sum_{n=M}^{N-1} e\bigl( h_2 r(f'(M) + (n-M)f''(M)) \bigr)}\\
			&\quad+ \frac{1}{N-M}\abs{\sum_{n=M}^{N-1} e\bigl( h_2 r(f'(M) + (n-M)f''(M)) \bigr)}.
		\end{split}\label{eqW1}
	\end{align}
	Taylor's theorem implies that for every integer $M<n<N$, 
	there exists $\theta_n\in(M,n)$ such that 
	\begin{equation*}
		f'(n) = f'(M) + (n-M)f''(M) + \frac{(n-M)^2}{2}f'''(\theta_n).
	\end{equation*}
	Due to this equality and the inequality $\abs{e(y)-e(x)} \le 2\pi\abs{y-x}$, 
	the first term of \eqref{eqW1} bounded from above by 
	\begin{align}
		&\quad \frac{2\pi\abs{h_2}r}{N-M}\sum_{n=M}^{N-1} \abs{f'(n) - (f'(M) + (n-M)f''(M))}\nonumber\\
		&\le \frac{2\pi\abs{h_2}r}{N-M}\sum_{n=M}^{N-1} \frac{(n-M)^2}{2}\abs{f'''(\theta_n)}. \label{eqW2}
	\end{align}
	To estimate \eqref{eqW2}, we note that 
	\[
	N-M \sim \Bigl( \frac{d+c_2}{r\alpha} \Bigr)^\beta - \Bigl( \frac{d+c_1}{r\alpha} \Bigr)^\beta
	\sim \frac{\beta(c_2-c_1)}{r\alpha}\Bigl( \frac{d}{r\alpha} \Bigr)^{\beta-1} \quad (d\to\infty).
	\]
	Eq.~\eqref{eqW2} is bounded from above by 
	\[
	\pi\abs{h_2}r(N-M)^2\abs{f'''(M)}
	\ll_{\alpha,c_1,c_2,h_2} d^{2(\beta-1)}d^{\beta(\alpha-3)} = d^{-1},
	\]
	whence \eqref{eqW2} is equal to $O_{\alpha,c_1,c_2,h_2}(1/d)$, and so is the first term of \eqref{eqW1}.
	Also, the second term of \eqref{eqW1} is equal to 
	\begin{equation}
	\begin{split}
		&\quad \frac{1}{N-M}\abs{\frac{1-e\bigl( h_2 r(N-M)f''(M) \bigr)}{1-e(h_2 rf''(M))}}\\
		&= \frac{1}{N-M}\abs{\frac{\sin\bigl( \pi h_2 r(N-M)f''(M) \bigr)}{\sin(\pi h_2 rf''(M))}}.
	\end{split}\label{eqW3}
	\end{equation}
	Since the relations $N-M \gg_{\alpha,r,c_1} d^{\beta-1}$ and $f''(M)\in(0,\pi/2)$ hold for sufficiently large $d\ge1$, 
	it follows that 
	\begin{align*}
		(N-M)\abs{\sin(\pi h_2 rf''(M))}
		&\ge (N-M)\cdot\frac{2}{\pi}\cdot\pi \abs{h_2}rf''(M)\\
		&\gg_{\alpha,r,c_1} d^{\beta-1}d^{\beta(\alpha-2)} = 1
	\end{align*}
	for sufficiently large $d\ge1$.
	Thus, \eqref{eqW3} is equal to 
	\[
	O_{\alpha,r,c_1}\Bigl( \abs{\sin\bigl( \pi h_2 r(N-M)f''(M) \bigr)} \Bigr)
	\]
	for sufficiently large $d\ge1$, 
	which vanishes as $d\to\infty$ because 
	\begin{align*}
		r(N-M)f''(M)
		&\sim r \cdot \frac{\beta(c_2-c_1)}{r\alpha}\Bigl( \frac{d}{r\alpha} \Bigr)^{\beta-1}
		\cdot \alpha(\alpha-1)\Bigl( \frac{d}{r\alpha} \Bigr)^{\beta(\alpha-2)}\\
		&= (c_2-c_1)\Bigl( \frac{d}{r\alpha} \Bigr)^{\beta-1}\Bigl( \frac{d}{r\alpha} \Bigr)^{1-\beta}\\
		&= c_2-c_1\in\mathbb{N} \quad (d\to\infty).
	\end{align*}
	Therefore, \eqref{eqW3} vanishes as $d\to\infty$, and so does the second term of \eqref{eqW1}.
	\par
	Next, we show \eqref{eqWeyl} when $h_1\not=0$.
	Let $h_1\not=0$ and $h_2$ be integers.
	The second derivative of the function $g(x) \coloneqq h_1 f(x) + h_2 rf'(x)$ satisfies that 
	\begin{align*}
		\abs{g''(x)} &\le \abs{h_1}\alpha(\alpha-1)x^{\alpha-2}(1 + \abs{h_2/h_1}r(2-\alpha)x^{-1})\\
		&\le 2\abs{h_1}\alpha(\alpha-1)x^{\alpha-2}
	\end{align*}
	and 
	\begin{align*}
		\abs{g''(x)} &\ge \abs{h_1}\alpha(\alpha-1)x^{\alpha-2}(1 - \abs{h_2/h_1}r(2-\alpha)x^{-1})\\
		&\ge (1/2)\abs{h_1}\alpha(\alpha-1)x^{\alpha-2}
	\end{align*}
	for every $x\ge M$ with sufficiently large $d\ge1$.
	Thus, for every $M\le x\le N$ with sufficiently large $d\ge1$, 
	\[
	\abs{g''(x)} \asymp_{\alpha,r,h_1} d^{\beta(\alpha-2)} = d^{1-\beta}.
	\]
	By Lemma~\ref{2ndderiv} and the inequality $N-M \gg_{\alpha,r,c_1} d^{\beta-1}$, 
	we obtain that 
	\begin{align*}
		&\quad \abs{\frac{1}{N-M}\sum_{n=M}^{N-1} e(h_1 f(n) + h_2 rf'(n))}\\
		&\ll_{\alpha,r,h_1} \frac{(N-M)d^{(1-\beta)/2} + d^{(\beta-1)/2}}{N-M}
		\ll_{\alpha,r,c_1} d^{(1-\beta)/2}
	\end{align*}
	for sufficiently large $d\ge1$.
	Therefore, \eqref{eqWeyl} follows.
\end{proof}

\section{Proofs of Propositions~$\ref{liminf}$ and $\ref{limsup}$}\label{proof1}

We prove Propositions~\ref{liminf} and \ref{limsup} by using Lemma~\ref{lemWeyl}.

\begin{proof}[Proof of Proposition~$\ref{liminf}$]
	Let $k\ge2$ be an integer.
	Take arbitrary $\epsilon\in(0,1)$ and $R\in\mathbb{N}$.
	We also take $x_0=x_0(k,\epsilon,R)>0$ such that 
	\begin{equation}
		\frac{R^2(k-1)^2}{2}\alpha(\alpha-1)x_0^{\alpha-2} \le \epsilon. \label{eqL1}
	\end{equation}
	Let us show that 
	\[
	\liminf_{d\to\infty} \frac{\cN_{\alpha,k}(d)}{d^{\beta-1}}
	\ge \frac{(1-\epsilon)^2\beta\alpha^{-\beta}}{k-1}\sum_{r=1}^R \frac{1}{r^\beta}.
	\]
	Define the convex set $\cC_k^{-}(\epsilon)$ of $\mathbb{R}^2$ as 
	\[
	\cC_k^{-}(\epsilon) = \{ (y_0,y_1)\in\mathbb{R}^2 : 0\le y_0<1-\epsilon,\ 0\le y_0+(k-1)y_1<1-\epsilon \}.
	\]
	Note that if $(y_0,y_1)\in\cC_k^{-}(\epsilon)$, 
	then $0\le y_0+jy_1<1-\epsilon$ for all $j=0,1,\ldots,k-1$.
	Taylor's theorem implies that for all integers $n,r\ge1$, $j\ge0$ and $s$, 
	\begin{equation}
	\begin{split}
		(n+rj)^\alpha &= n^\alpha + rj\alpha n^{\alpha-1} + \frac{(rj)^2}{2}\alpha(\alpha-1)(n+rj\theta)^{\alpha-2}\\
		&= \lfloor{n^\alpha}\rfloor + j(\lfloor{r\alpha n^{\alpha-1}}\rfloor+s) + \delta_s(n,r,j),
	\end{split}\label{eqL2}
	\end{equation}
	where $\theta=\theta(n,r,j)\in(0,1)$ and 
	\[
	\delta_s(n,r,j) \coloneqq \{n^\alpha\} + j(\{r\alpha n^{\alpha-1}\}-s) + \frac{(rj)^2}{2}\alpha(\alpha-1)(n+rj\theta)^{\alpha-2}.
	\]
	Thus, if $s\in\mathbb{Z}$, $n\ge x_0$ and $(\{n^\alpha\}, \{r\alpha n^{\alpha-1}\}-s)\in\cC_k^{-}(\epsilon)$, 
	then $0\le\delta_s(n,r,j)<1$ and 
	$\lfloor{(n+rj)^\alpha}\rfloor = \lfloor{n^\alpha}\rfloor + j(\lfloor{r\alpha n^{\alpha-1}}\rfloor+s)$ 
	for all $r=1,2,\ldots,R$ and $j=0,1,\ldots,k-1$.
	(We have $s\in\{0,1\}$ under the same assumptions, but this fact is not necessarily used here.)
	Also, the following equivalence holds: 
	\[
	\lfloor{r\alpha n^{\alpha-1}}\rfloor + s = d
	\iff \Bigl( \frac{d-s}{r\alpha} \Bigr)^\beta \le n < \Bigl( \frac{d-s+1}{r\alpha} \Bigr)^\beta.
	\]
	From the above facts, it follows that for sufficiently large $d\ge1$, 
	\begin{align*}
		\cN_{\alpha,k}(d) &\ge \sum_{r=1}^R \#\Biggl\{ n\in\mathbb{N} : 
		\begin{array}{l}
			\forall j=0,1,\ldots,k-1,\\
			\lfloor{(n+rj)^\alpha}\rfloor = \lfloor{n^\alpha}\rfloor+dj
		\end{array}
		\Biggr\}\\
		&\ge \sum_{r=1}^R \sum_{s\in\mathbb{Z}} \#\Biggl\{ n\ge x_0 : 
		\begin{array}{c}
			\displaystyle \Bigl( \frac{d-s}{r\alpha} \Bigr)^\beta \le n < \Bigl( \frac{d-s+1}{r\alpha} \Bigr)^\beta,\vspace{1ex}\\
			(\{n^\alpha\}, \{r\alpha n^{\alpha-1}\}-s)\in\cC_k^{-}(\epsilon)
		\end{array}
		\Biggr\},
	\end{align*}
	where the sum $\sum_{s\in\mathbb{Z}}$ is a finite sum due to the boundedness of $\cC_k^{-}(\epsilon)$ 
	(or due to $s\in\{0,1\}$ as already stated).
	Using Lemma~\ref{lemWeyl}, we obtain 
	\begin{align*}
		&\quad \liminf_{d\to\infty} \frac{\cN_{\alpha,k}(d)}{d^{\beta-1}}
		\ge \sum_{r=1}^R \sum_{s\in\mathbb{Z}} \frac{\beta}{(r\alpha)^\beta}\mu\Bigl( \cC_k^{-}(\epsilon)\cap\bigl( [0,1)\times[-s,1-s) \bigr) \Bigr)\\
		&= \sum_{r=1}^R \frac{\beta}{(r\alpha)^\beta}\mu\bigl( \cC_k^{-}(\epsilon) \bigr)
		= \sum_{r=1}^R \frac{\beta}{(r\alpha)^\beta}\cdot\frac{(1-\epsilon)^2}{k-1}
		= \frac{(1-\epsilon)^2\beta\alpha^{-\beta}}{k-1}\sum_{r=1}^R \frac{1}{r^\beta}.
	\end{align*}
	Finally, letting $\epsilon\to+0$ and $R\to\infty$, 
	we complete the proof.
\end{proof}

\begin{proof}[Proof of Proposition~$\ref{limsup}$]
	Let $k\ge2$ be an integer.
	Take arbitrary $\epsilon\in(0,1)$ and $R\in\mathbb{N}$.
	We also take $x_0=x_0(k,\epsilon,R)>0$ that satisfies \eqref{eqL1}.
	Let us show that 
	\begin{equation}
	\begin{split}
		\limsup_{d\to\infty} \frac{\cN_{\alpha,k}(d)}{d^{\beta-1}}
		&\le \frac{(1+\epsilon)\beta\alpha^{-\beta}}{k-1}\sum_{r=1}^R \frac{1}{r^\beta}\\
		&\quad+ \limsup_{d\to\infty} \frac{E_1(d)+E_2(d)}{d^{\beta-1}} + O_\alpha\biggl( \sum_{r>R} \frac{1}{r^\beta} \biggr),
	\end{split}\label{eqU01}
	\end{equation}
	where the implicit constant only depends on $\alpha\in(1,2)$.
	Define the convex set $\cC_k^{+}(\epsilon)$ of $\mathbb{R}^2$ as 
	\[
	\cC_k^{+}(\epsilon) = \{ (y_0,y_1)\in\mathbb{R}^2 : 0\le y_0<1,\ -\epsilon\le y_0+(k-1)y_1<1 \}.
	\]
	\par\setcounter{count}{0}
	\textbf{Step~\num.}
	Take $x_0=x_0(k,\epsilon,R)>0$ that satisfies \eqref{eqL1}.
	We show that if integers $d\ge1$, $n\ge x_0$ and $1\le r\le R$ satisfy 
	$\lfloor{(n+rj)^\alpha}\rfloor = \lfloor{n^\alpha}\rfloor+dj$ for all $j=0,1,\ldots,k-1$, 
	then there exists an integer $s<d$ such that 
	\begin{itemize}
		\item
		the point $(\{n^\alpha\}, \{r\alpha n^{\alpha-1}\}-s)$ lies in $\cC_k^{+}(\epsilon)$, and 
		\item
		$\displaystyle \Bigl( \frac{d-s}{r\alpha} \Bigr)^\beta \le n < \Bigl( \frac{d-s+1}{r\alpha} \Bigr)^\beta$.
	\end{itemize}
	Set $s=d-\lfloor{r\alpha n^{\alpha-1}}\rfloor$.
	Then the second condition holds.
	Since Taylor's theorem implies \eqref{eqL2}, 
	it follows that 
	\begin{align*}
		(n+rj)^\alpha &= \lfloor{n^\alpha}\rfloor + j(\lfloor{r\alpha n^{\alpha-1}}\rfloor+s) + \delta_s(n,r,j)\\
		&= \lfloor{n^\alpha}\rfloor + dj + \delta_s(n,r,j)
		= \lfloor{(n+rj)^\alpha}\rfloor + \delta_s(n,r,j)
	\end{align*}
	for all $j=0,1,\ldots,k-1$.
	This yields that $0\le\delta_s(n,r,j)<1$ for all $j=0,1,\ldots,k-1$.
	Thus, the point $(\{n^\alpha\}, \{r\alpha n^{\alpha-1}\}-s)$ lies in $\cC_k^{+}(\epsilon)$.
	\par
	\textbf{Step~\num.}
	For an integer $d\ge1$, define the number $E_0(d,R)$ as 
	\begin{equation}
		E_0(d,R) = \#\{ (n,r)\in\mathbb{N}^2 : r>R,\ \lfloor{(n+r)^\alpha}\rfloor - \lfloor{n^\alpha}\rfloor = d \}.
		\label{eqU20}
	\end{equation}
	By Step~1, we have that for sufficiently large $d\ge1$, 
	\begin{align*}
		\cN_{\alpha,k}(d) &= \sum_{r=1}^R \#\Biggl\{ n\in\mathbb{N} : 
		\begin{array}{l}
			\forall j=0,1,\ldots,k-1,\\
			\lfloor{(n+rj)^\alpha}\rfloor = \lfloor{n^\alpha}\rfloor+dj
		\end{array}
		\Biggr\}\\
		&\quad+ E_0(d,R)\\
		&\le \sum_{r=1}^R \sum_{s\in\mathbb{Z}} \#\Biggl\{ n\ge x_0 : 
		\begin{array}{c}
			\displaystyle \Bigl( \frac{d-s}{r\alpha} \Bigr)^\beta \le n < \Bigl( \frac{d-s+1}{r\alpha} \Bigr)^\beta,\vspace{1ex}\\
			(\{n^\alpha\}, \{r\alpha n^{\alpha-1}\}-s)\in\cC_k^{+}(\epsilon)
		\end{array}
		\Biggr\}\\
		&\quad+ Rx_0 + E_0(d,R),
	\end{align*}
	where the sum $\sum_{s\in\mathbb{Z}}$ is a finite sum due to the boundedness of $\cC_k^{+}(\epsilon)$.
	Lemma~\ref{lemWeyl} implies that 
	\begin{align*}
		\limsup_{d\to\infty} \frac{\cN_{\alpha,k}(d)}{d^{\beta-1}}
		&\le \sum_{r=1}^R \sum_{s\in\mathbb{Z}} \frac{\beta}{(r\alpha)^\beta}\mu\Bigl( \cC_k^{+}(\epsilon)\cap\bigl( [0,1)\times[-s,1-s) \bigr) \Bigr)\\
		&\quad+ \limsup_{d\to\infty} \frac{E_0(d,R)}{d^{\beta-1}}\\
		&= \sum_{r=1}^R \frac{\beta}{(r\alpha)^\beta}\mu\bigl( \cC_k^{+}(\epsilon) \bigr)
		+ \limsup_{d\to\infty} \frac{E_0(d,R)}{d^{\beta-1}}\\
		&= \frac{(1+\epsilon)\beta\alpha^{-\beta}}{k-1}\sum_{r=1}^R \frac{1}{r^\beta}
		+ \limsup_{d\to\infty} \frac{E_0(d,R)}{d^{\beta-1}}.
	\end{align*}
	Also, it follows that 
	\begin{align}
		E_0(d,R)
		&\le \#\{ (n,r)\in\mathbb{N}^2 : r>R,\ d-1<f_r(n)<d+1 \}\nonumber\\
		\begin{split}
			&\le \#\{ (n,r)\in\mathbb{N}^2 : R<r\le(d-1)^{1/\alpha}/4,\ d-1 < f_r(n) < d+1 \}\\
			&\quad+ \#\{ (n,r)\in\mathbb{N}^2 : r>(d-1)^{1/\alpha}/4,\ d-1 < f_r(n) < d+1 \}.
		\end{split}\label{eqU12}
	\end{align}
	\par
	\textbf{Step~\num.}
	Let us estimate the first term of \eqref{eqU12}.
	For every $1\le r\le(d-1)^{1/\alpha}$, 
	the inverse function of $f_r\colon [0,\infty)\to[r^\alpha,\infty)$ is defined, 
	and so are the values $f_r^{-1}(d-1)$ and $f_r^{-1}(d+1)$.
	Thus, 
	\begin{align}
		&\quad \#\{ (n,r)\in\mathbb{N}^2 : R<r\le(d-1)^{1/\alpha}/4,\ d-1 < f_r(n) < d+1 \}\nonumber\\
		&= \sum_{R<r\le(d-1)^{1/\alpha}/4}
		\#\{ n\in\mathbb{N} : f_r^{-1}(d-1) < n < f_r^{-1}(d+1) \}\nonumber\\
		&\le \sum_{R<r\le(d-1)^{1/\alpha}/4}
		\bigl( \lfloor{f_r^{-1}(d+1)}\rfloor - \lfloor{f_r^{-1}(d-1)}\rfloor \bigr)\nonumber\\
		\begin{split}
			&= \sum_{R<r\le(d-1)^{1/\alpha}/4} F_r(d-1)\\
			&\quad+ \sum_{R<r\le(d-1)^{1/\alpha}/4} \bigl( \{f_r^{-1}(d-1)\} - \{f_r^{-1}(d+1)\} \bigr),
		\end{split}\label{eqU13}
	\end{align}
	where $F_r(x) \coloneqq f_r^{-1}(x+2)-f_r^{-1}(x)$.
	The mean value theorem implies that 
	\[
	F_r(d-1) = \frac{2}{(f'_r\circ f_r^{-1})(d+\theta)}
	= \frac{2}{f'_r(y_\theta)}
	< \frac{2(y_\theta+r)^{2-\alpha}}{\alpha(\alpha-1)r},
	\]
	where $\theta=\theta(r,d)\in(-1,1)$ and $y_\theta \coloneqq f_r^{-1}(d+\theta)$.
	If $d\ge1$ and $1\le r\le(d-1)^{1/\alpha}/4<d^{1/\alpha}$, 
	then the inequalities $r<(d/r)^\beta$, 
	\begin{gather*}
		y_\theta+r < \Bigl( \frac{d+\theta}{r\alpha} \Bigr)^\beta + (d/r)^\beta
		\ll_\alpha (d/r)^\beta,\\
		F_r(d-1) \ll_\alpha r^{-1}(d/r)^{\beta(2-\alpha)} = d^{\beta-1}/r^\beta
	\end{gather*}
	hold. Thus, the first sum of \eqref{eqU13} is equal to $O_\alpha( d^{\beta-1}\sum_{r>R} 1/r^\beta)$.
	The second sum of \eqref{eqU13} is bounded from above by 
	\begin{align*}
		&\quad \#\bigl\{ r\le(d-1)^{1/\alpha}/4 : \{f_r^{-1}(d-1)\} > \{f_r^{-1}(d+1)\} \bigr\}\\
		&\le \#\bigl\{ r\le(d-1)^{1/\alpha}/4 : \{f_r^{-1}(d-1)\} + F_r(d-1) \ge 1 \bigr\}\\
		&\le \#\bigl\{ r\le(d-1)^{1/\alpha}/4 : \{f_r^{-1}(d-1)\} + C_1(d-1)^{\beta-1}/r^\beta > 1 \bigr\}\\
		&= E_1(d-1),
	\end{align*}
	where $C_1>0$ is a constant only depending on $\alpha$.
	Therefore, the first term of \eqref{eqU12}
	is less than or equals to $O_\alpha(d^{\beta-1}\sum_{r>R} 1/r^\beta) + E_1(d-1)$.
	\par
	\textbf{Step~\num.}
	Let us estimate the second term of \eqref{eqU12}.
	If $d\ge2$, $r>(d-1)^{1/\alpha}/4$ and $f_r(n) < d+1$, 
	then the mean value theorem implies that 
	\begin{align*}
		(d-1)^{1/\alpha}n^{\alpha-1}
		&\ll (d-1)^{1/\alpha}(\alpha/4)n^{\alpha-1}\\
		&< r\alpha n^{\alpha-1} < f_r(n) < d+1 \ll d-1,
	\end{align*}
	whence $n\ll_\alpha (d-1)^{1/\alpha}$.
	Thus, for some constant $C_2>0$ only depending on $\alpha$, 
	\begin{align}
		&\quad \#\{ (n,r)\in\mathbb{N}^2 : r>(d-1)^{1/\alpha}/4,\ d-1 < f_r(n) < d+1 \}\nonumber\\
		&\le \#\Biggl\{ (n,r)\in\mathbb{N}^2 : 
		\begin{array}{c}
			n<C_2(d-1)^{1/\alpha},\\
			(n^\alpha+d-1)^{1/\alpha} < n+r < (n^\alpha+d+1)^{1/\alpha}
		\end{array}
		\Biggr\}\nonumber\\
		&= \sum_{n<C_2(d-1)^{1/\alpha}} \bigl( \lfloor{(n^\alpha+d+1)^{1/\alpha}}\rfloor - \lfloor{(n^\alpha+d-1)^{1/\alpha}}\rfloor \bigr)\nonumber\\
		\begin{split}
			&= \sum_{n<C_2(d-1)^{1/\alpha}} G_n(d-1)\\
			&\quad+ \sum_{n<C_2(d-1)^{1/\alpha}} \bigl( \{(n^\alpha+d-1)^{1/\alpha}\} - \{(n^\alpha+d+1)^{1/\alpha}\} \bigr),
		\end{split}\label{eqU14}
	\end{align}
	where $G_n(x) \coloneqq (n^\alpha+x+2)^{1/\alpha} - (n^\alpha+x)^{1/\alpha}$.
	Since the mean value theorem implies that 
	\[
	G_n(d-1) < (2/\alpha)(n^\alpha+d-1)^{1/\alpha-1} < 2(d-1)^{1/\alpha-1},
	\]
	the first sum of \eqref{eqU14} is bounded from above by 
	\[
	C_2(d-1)^{1/\alpha}\cdot2(d-1)^{1/\alpha-1}
	\ll_\alpha d^{2/\alpha-1}.
	\]
	Moreover, the second sum of \eqref{eqU14} is bounded from above by 
	\begin{align*}
		&\quad \#\bigl\{ n<C_2(d-1)^{1/\alpha} : \{(n^\alpha+d-1)^{1/\alpha}\} > \{(n^\alpha+d+1)^{1/\alpha}\} \bigr\}\\
		&\le \#\bigl\{ n<C_2(d-1)^{1/\alpha} : \{(n^\alpha+d-1)^{1/\alpha}\} + G_n(d-1) \ge 1 \bigr\}\\
		&\le \#\bigl\{ n<C_2(d-1)^{1/\alpha} : \{(n^\alpha+d-1)^{1/\alpha}\} + 2(d-1)^{1/\alpha-1} > 1 \bigr\}\\
		&= E_2(d-1).
	\end{align*}
	Therefore, the second term of \eqref{eqU12} 
	is less than or equal to $O_\alpha(d^{2/\alpha-1}) + E_2(d-1)$.
	\par
	\textbf{Step~\num.}
	By Steps~2--4, the inequality \eqref{eqU01} holds.
	Letting $\epsilon\to+0$ and $R\to\infty$ in \eqref{eqU01}, 
	we complete the proof.
\end{proof}

\section{Discrepancy and preliminary lemmas}

This section is a preparation to prove Lemmas~\ref{lemU4}--\ref{lemU5}.
For a sequence $(x_n)_{n=1}^N$ of real numbers, 
define the \textit{discrepancy} $D(x_1,\ldots,x_N)$ as 
\begin{align*}
	&\quad D(x_1,\ldots,x_N)\\
	&= \sup_{0\le a<b\le1} \abs{\frac{\#\bigl\{ n\in\mathbb{N} : n\le N,\ a\le\{x_n\}<b \bigr\}}{N} - (b-a)}.
\end{align*}
As easily expected, the estimates of $E_1(d)$ and $E_2(d)$ defined in Proposition~\ref{limsup} 
are derived from the estimates of corresponding discrepancies.
To estimate discrepancies, we use the following lemma.

\begin{lemma}[Koksma--Sz\"{u}sz \cite{Koksma,Szusz}]\label{ETK}
	For all $N,H\in\mathbb{N}$ and $x_1,\ldots,x_N\in\mathbb{R}$, 
	\[
	D(x_1,\ldots,x_N)
	\ll \frac{1}{H} + \sum_{1\le h\le H} \frac{1}{h}\abs{\frac{1}{N}\sum_{n=1}^N e(hx_n)}.
	\]
\end{lemma}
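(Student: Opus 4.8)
The plan is to bound, for each subinterval $[a,b)\subseteq[0,1)$, the quantity $\abs{N^{-1}\#\{n\le N:\{x_n\}\in[a,b)\}-(b-a)}$ by sandwiching the $1$-periodic indicator $\chi_{[a,b)}$ between two trigonometric polynomials of degree at most $H$ whose Fourier coefficients are well controlled, and then evaluating those polynomials along the sequence $(x_n)_{n=1}^N$. The key input I would use is the classical Beurling--Selberg construction: for every $0\le a<b\le1$ and every $H\in\mathbb N$ there exist $1$-periodic trigonometric polynomials $S^{\pm}$ of degree $\le H$ such that $S^{-}(x)\le\chi_{[a,b)}(x)\le S^{+}(x)$ for all $x$, such that $\widehat{S^{\pm}}(0)=\int_0^1 S^{\pm}(x)\,dx=(b-a)\pm\frac{1}{H+1}$, and such that
\[
\abs{\widehat{S^{\pm}}(h)}\le\frac{1}{H+1}+\frac{1}{\pi\abs{h}}\qquad(0<\abs{h}\le H).
\]
Write $V(h)=N^{-1}\sum_{n=1}^N e(hx_n)$, so that $V(0)=1$ and $\abs{V(-h)}=\abs{V(h)}$.

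Next I would expand $S^{\pm}$ into their finite Fourier series and average along the sequence. Since $\chi_{[a,b)}(x_n)=1$ exactly when $\{x_n\}\in[a,b)$, the inequality $\chi_{[a,b)}\le S^{+}$ gives
\[
\frac{\#\{n\le N:\{x_n\}\in[a,b)\}}{N}=\frac1N\sum_{n=1}^N\chi_{[a,b)}(x_n)\le\frac1N\sum_{n=1}^N S^{+}(x_n)=\sum_{\abs{h}\le H}\widehat{S^{+}}(h)\,V(h),
\]
and symmetrically $N^{-1}\#\{n\le N:\{x_n\}\in[a,b)\}\ge\sum_{\abs{h}\le H}\widehat{S^{-}}(h)\,V(h)$. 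Isolating the term $h=0$, which equals $(b-a)\pm\frac{1}{H+1}$, and bounding the remaining terms via the coefficient estimate together with the elementary inequality $\frac{1}{H+1}\le\frac1h$ (valid for $1\le h\le H$), the contribution of $h\ne0$ is at most $2\sum_{h=1}^H\bigl(\frac{1}{H+1}+\frac{1}{\pi h}\bigr)\abs{V(h)}\le C\sum_{h=1}^H\frac1h\abs{V(h)}$ for an absolute constant $C$. Combining the upper and lower bounds yields
\[
\abs{\frac{\#\{n\le N:\{x_n\}\in[a,b)\}}{N}-(b-a)}\le\frac{1}{H+1}+C\sum_{h=1}^H\frac1h\abs{V(h)},
\]
and taking the supremum over $0\le a<b\le1$ (the case $a=0$, $b=1$ being trivial, since then $\chi_{[a,b)}\equiv1$) gives the asserted bound on $D(x_1,\dots,x_N)$, after noting $\frac{1}{H+1}<\frac1H$.

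The step I expect to be the real obstacle is the one I treated as a black box: producing the majorant and minorant trigonometric polynomials $S^{\pm}$ with simultaneous control on their mean and on \emph{every} nonzero Fourier coefficient up to order $H$. Merely truncating the Fourier expansion of $\chi_{[a,b)}$, or of a trapezoidal approximant, destroys the one-sided inequality $S^{-}\le\chi_{[a,b)}\le S^{+}$; repairing this is precisely what Selberg's device achieves, by taking the Beurling extremal majorant/minorant of a signum function, periodizing it, and adding a Fej\'er-kernel term of height $\frac{1}{H+1}$ to restore the inequality at the cost of perturbing only the zeroth coefficient. This construction is classical and in this form is attributed to Koksma and Sz\"usz \cite{Koksma,Szusz}; in the write-up I would either cite it directly or include the short self-contained construction, after which the remainder is the routine computation sketched above.
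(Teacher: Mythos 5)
The paper does not prove this lemma at all: it is quoted as a classical result (the Erd\H{o}s--Tur\'an--Koksma inequality) with citations to Koksma and Sz\"usz, so there is no in-paper argument to compare against. Your proof is correct as an argument in its own right. It follows the now-standard route via the Beurling--Selberg extremal functions (in Vaaler's formulation): the majorant/minorant $S^{\pm}$ with $\widehat{S^{\pm}}(0)=(b-a)\pm\frac{1}{H+1}$ and $\abs{\widehat{S^{\pm}}(h)}\le\frac{1}{H+1}+\frac{1}{\pi\abs{h}}$ exist exactly as you state, the averaging step $\frac1N\sum_n S^{\pm}(x_n)=\sum_{\abs{h}\le H}\widehat{S^{\pm}}(h)V(h)$ is immediate since $S^{\pm}$ are trigonometric polynomials of degree $\le H$, and your use of $\frac{1}{H+1}\le\frac1h$ to absorb everything into the stated form is fine; the $\ll$ in the lemma tolerates the absolute constant $2(1+1/\pi)$ you pick up. You are right that the construction of $S^{\pm}$ is the genuine content, and treating it as a cited black box is reasonable here given that the paper itself only cites the inequality. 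For the record, the original proofs of Koksma and Sz\"usz (and of Erd\H{o}s--Tur\'an) do not use Selberg's functions but more elementary Fej\'er-kernel smoothing of the indicator; that route also works and yields the same shape of bound with worse constants, so your choice trades a heavier imported construction for a cleaner final computation. One small point of care if you write this up fully: the indicator must be understood as the $1$-periodic extension of $\chi_{[a,b)}$ (as you note), and the endpoint case $b=1$ allowed by the paper's definition of $D$ is covered either by your trivial-case remark or by continuity in $b$.
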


\begin{remark}
	Lemma~\ref{ETK} still holds even if $H$ is a (non-integral) positive number.
	Indeed, the case $0<H<2$ is trivial, since $D(x_1,\ldots,x_N) \le 1 \ll H^{-1}$.
	If $H\ge2$, then the desired inequality follows from 
	\[
	D(x_1,\ldots,x_N)
	\ll \frac{1}{\lfloor{H}\rfloor} + \sum_{1\le h\le\lfloor{H}\rfloor} \frac{1}{h}\abs{\frac{1}{N}\sum_{n=1}^N e(hx_n)}
	\]
	and $H\ge\lfloor{H}\rfloor>H-1\ge H/2$.
	Hence, we use Lemma~\ref{ETK} with not necessarily integral $H>0$.
\end{remark}

The above inequality is sometimes referred as the Erd\H{o}s-Tur\'an-Koksma inequality.
By Lemma~\ref{ETK}, the estimation of discrepancies reduces to that of exponential sums.
In order to apply Lemmas~\ref{2ndderiv} and \ref{3rdderiv}, 
we examine the second and third derivatives of the function $f_r^{-1}(d)$ of $r$, 
where $f_r$ is defined in Proposition~\ref{limsup}.

\begin{lemma}\label{lemU0}
	Let $1<\alpha<2$, $\beta=1/(\alpha-1)$, $d\in\mathbb{N}$ and $c>0$.
	Define the function $y$ of $0<r\le d^{1/\alpha}$ as $y=f_r^{-1}(d)$.
	Then $y\le y+r\le(1+c^{-1})y$ for all $0<r\le d^{1/\alpha}/f_1(c)^{1/\alpha}$.
\end{lemma}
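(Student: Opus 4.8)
The plan is to translate the claimed inequality into a statement about the function $f_r$ and then exploit its strict monotonicity. Since $r>0$ the bound $y\le y+r$ is trivial, so the only content is $y+r\le(1+c^{-1})y$, which is equivalent to $cr\le y$.

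First I would check that everything is well defined on the stated range of $r$. Note that $f_1(c)=(c+1)^\alpha-c^\alpha\ge 1$ for every $c>0$: indeed $f_1(0)=1$ and $f_1'(c)=\alpha\bigl((c+1)^{\alpha-1}-c^{\alpha-1}\bigr)>0$ because $0<\alpha-1<1$. Hence $d^{1/\alpha}/f_1(c)^{1/\alpha}\le d^{1/\alpha}$, so any $r$ with $0<r\le d^{1/\alpha}/f_1(c)^{1/\alpha}$ indeed lies in $(0,d^{1/\alpha}]$; moreover $r^\alpha\le d/f_1(c)\le d$, so $d$ belongs to the domain $[r^\alpha,\infty)$ of $f_r^{-1}$ and $y=f_r^{-1}(d)$ makes sense.

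Next, since $f_r\colon[0,\infty)\to[r^\alpha,\infty)$ is strictly increasing and $f_r(y)=d$, the inequality $cr\le y$ holds if and only if $f_r(cr)\le f_r(y)=d$. A direct computation gives
\[
f_r(cr)=(cr+r)^\alpha-(cr)^\alpha=r^\alpha\bigl((c+1)^\alpha-c^\alpha\bigr)=r^\alpha f_1(c),
\]
so $f_r(cr)\le d$ is precisely the condition $r^\alpha f_1(c)\le d$, i.e.\ $r\le d^{1/\alpha}/f_1(c)^{1/\alpha}$, which is the hypothesis. Therefore $cr\le y$, and so $y+r\le(1+c^{-1})y$, completing the argument. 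There is essentially no obstacle here; the only point needing a moment's care is the well-definedness of $f_r^{-1}(d)$ throughout the range of $r$, which is exactly what the observation $f_1(c)\ge 1$ secures.
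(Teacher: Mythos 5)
Your proof is correct and is essentially the paper's own argument: both reduce the claim to $cr\le y$ via the identity $f_r(cr)=r^\alpha f_1(c)\le d=f_r(y)$ and the strict monotonicity of $f_r$. The extra observation that $f_1(c)\ge1$ (so the stated range of $r$ sits inside $(0,d^{1/\alpha}]$) is a harmless added check that the paper omits.
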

\begin{proof}
	Let $0<r\le d^{1/\alpha}/f_1(c)^{1/\alpha}$.
	Then $f_r(cr)=f_1(c)r^\alpha\le d=f_r(y)$.
	Thus, $cr\le y$ and $y\le y+r\le(1+c^{-1})y$.
\end{proof}

\begin{lemma}\label{lemU2}
	Let $1<\alpha<2$, $\beta=1/(\alpha-1)$ and $d\in\mathbb{N}$.
	Define the function $y$ of $0<r\le d^{1/\alpha}$ as $y=f_r^{-1}(d)$.
	Then 
	\begin{equation}
		y'' = \frac{d(\alpha-1)}{((y+r)^{\alpha-1}-y^{\alpha-1})^3 y^{2-\alpha}(y+r)^{2-\alpha}}. \label{eqU15}
	\end{equation}
	In particular, $y'' \asymp_\alpha d^\beta/r^{\beta+2}$ for all $0<r\le d^{1/\alpha}/2$.
\end{lemma}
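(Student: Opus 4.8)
The plan is to derive \eqref{eqU15} by differentiating the defining identity $(y+r)^\alpha-y^\alpha=d$ twice with respect to $r$, and then to read off the order of magnitude from Lemma~\ref{lemU0} together with the mean value theorem.

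First I would differentiate $(y+r)^\alpha-y^\alpha=d$ once. Setting $u:=y+r$ (so $u'=y'+1$ and $u''=y''$), this gives $u^{\alpha-1}u'=y^{\alpha-1}y'$, hence
\[
y'=\frac{-(y+r)^{\alpha-1}}{(y+r)^{\alpha-1}-y^{\alpha-1}},\qquad
u'=y'+1=\frac{-y^{\alpha-1}}{(y+r)^{\alpha-1}-y^{\alpha-1}},
\]
where the common denominator $D:=(y+r)^{\alpha-1}-y^{\alpha-1}$ is positive because $0<\alpha-1<1$ and $y,y+r>0$. Differentiating $u^{\alpha-1}u'=y^{\alpha-1}y'$ once more and using $u''=y''$ yields
\[
y''\,D=(\alpha-1)\bigl(y^{\alpha-2}(y')^2-(y+r)^{\alpha-2}(u')^2\bigr).
\]
Substituting the expressions above, the parenthesis equals $D^{-2}\bigl(y^{\alpha-2}(y+r)^{2\alpha-2}-(y+r)^{\alpha-2}y^{2\alpha-2}\bigr)=D^{-2}y^{\alpha-2}(y+r)^{\alpha-2}\bigl((y+r)^\alpha-y^\alpha\bigr)$, and $(y+r)^\alpha-y^\alpha=d$ by the definition of $y$. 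Dividing by $D$ gives exactly \eqref{eqU15}. This collapse of the factor $(y+r)^\alpha-y^\alpha$ to $d$ is the only non-mechanical step in the differentiation; beyond it the computation is bookkeeping, the main pitfall being the (negative) sign of $y'$.

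For the asymptotic claim, fix $c=c(\alpha)>0$ with $f_1(c)^{1/\alpha}\le 2$ (possible since $f_1$ is continuous and increasing with $f_1(0)=1$), so that Lemma~\ref{lemU0} applies for every $0<r\le d^{1/\alpha}/2$ and gives $y\le y+r\le(1+c^{-1})y$; in particular $y+r\asymp_\alpha y$ on this range. The mean value theorem then yields $(y+r)^{\alpha-1}-y^{\alpha-1}\asymp_\alpha r\,y^{\alpha-2}$ and $d=(y+r)^\alpha-y^\alpha\asymp_\alpha r\,y^{\alpha-1}$, the latter forcing $y\asymp_\alpha(d/r)^\beta$. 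Inserting these estimates into \eqref{eqU15}, the denominator is $\asymp_\alpha r^3 y^{3(\alpha-2)}\cdot y^{2(2-\alpha)}=r^3 y^{\alpha-2}$, whence $y''\asymp_\alpha d\,y^{2-\alpha}/r^3$; since $\beta(2-\alpha)=\beta-1$ we have $y^{2-\alpha}\asymp_\alpha(d/r)^{\beta-1}$, and therefore $y''\asymp_\alpha d^\beta/r^{\beta+2}$, as claimed. I expect this second half to be routine once Lemma~\ref{lemU0} has supplied $y+r\asymp_\alpha y$; the only real obstacle anywhere in the argument is the algebraic simplification in the second differentiation.
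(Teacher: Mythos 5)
Your proposal is correct and follows essentially the same route as the paper: implicit differentiation of $f_r(y)=d$ to get closed forms for $y'$ and $y'+1$, a second differentiation in which $(y+r)^\alpha-y^\alpha$ collapses to $d$, and then Lemma~\ref{lemU0} plus the mean value theorem to obtain $y\asymp y+r\asymp_\alpha(d/r)^\beta$ and $(y+r)^{\alpha-1}-y^{\alpha-1}\asymp_\alpha ry^{\alpha-2}$. The only difference is cosmetic: you differentiate the first-order identity $u^{\alpha-1}u'=y^{\alpha-1}y'$ directly, whereas the paper takes the logarithmic derivative of the solved expression for $-y'$; both yield \eqref{eqU15}.
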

\begin{proof}
	Differentiating both sides of $f_r(y)=d$ with respect to $r$, 
	we obtain 
	\[
	(y'+1)(y+r)^{\alpha-1} - y'y^{\alpha-1} = 0.
	\]
	Thus, 
	\begin{equation}
		-y' = \frac{(y+r)^{\alpha-1}}{(y+r)^{\alpha-1}-y^{\alpha-1}} \quad\text{and}\quad
		-y'-1 = \frac{y^{\alpha-1}}{(y+r)^{\alpha-1}-y^{\alpha-1}}. \label{eqU16}
	\end{equation}
	Taking the logarithmic derivative of both sides of the first equality in \eqref{eqU16}, 
	we have 
	\begin{align*}
		\frac{y''}{y'} &= (\alpha-1)\Bigl( \frac{y'+1}{y+r}
		- \frac{(y'+1)(y+r)^{\alpha-2}-y'y^{\alpha-2}}{(y+r)^{\alpha-1}-y^{\alpha-1}} \Bigr)\\
		&= (\alpha-1)\cdot\frac{-(\bcancel{y'}+1)y^{\alpha-1} + y'(\bcancel{y}+r)y^{\alpha-2}}{(y+r)\bigl( (y+r)^{\alpha-1}-y^{\alpha-1} \bigr)}\\
		&= (\alpha-1)y^{\alpha-2}\cdot\frac{-y+ry'}{(y+r)\bigl( (y+r)^{\alpha-1}-y^{\alpha-1} \bigr)}.
	\end{align*}
	Since 
	\begin{align*}
		y-ry' &= \frac{y\bigl( (y+r)^{\alpha-1}-y^{\alpha-1} \bigr) + r(y+r)^{\alpha-1}}{(y+r)^{\alpha-1}-y^{\alpha-1}}\\
		&= \frac{(y+r)^\alpha-y^\alpha}{(y+r)^{\alpha-1}-y^{\alpha-1}}
		= \frac{d}{(y+r)^{\alpha-1}-y^{\alpha-1}},
	\end{align*}
	it follows that 
	\[
	\frac{y''}{y'} = \frac{-d(\alpha-1)y^{\alpha-2}}{(y+r)\bigl( (y+r)^{\alpha-1}-y^{\alpha-1} \bigr)^2}
	\]
	and 
	\[
	y'' = \frac{d(\alpha-1)y^{\alpha-2}(y+r)^{\alpha-2}}{\bigl( (y+r)^{\alpha-1}-y^{\alpha-1} \bigr)^3}.
	\]
	\par
	Next, assume $0<r\le d^{1/\alpha}/2$.
	Then $f_1(1)=2^\alpha-1<2^\alpha$ and $r \le d^{1/\alpha}/2 < d^{1/\alpha}/f_1(1)^{1/\alpha}$.
	By Lemma~\ref{lemU0}, the inequality $y\le y+r\le2y$ holds.
	Also, the mean value theorem implies that 
	\[
	r\alpha y^{\alpha-1} < f_r(y) < r\alpha(y+r)^{\alpha-1}.
	\]
	Since $f_r(y)=d$, it follows that $(d/r\alpha)^\beta<y+r$ and $y<(d/r\alpha)^\beta$.
	Therefore, 
	\begin{equation}
		y \asymp y+r \asymp_\alpha (d/r)^\beta \quad\text{and}\quad
		(y+r)^{\alpha-1}-y^{\alpha-1} \asymp_\alpha ry^{\alpha-2}, \label{eqU19}
	\end{equation}
	whence $y'' \asymp_\alpha d^\beta/r^{\beta+2}$.
\end{proof}

\begin{lemma}\label{lemU3}
	Let $1<\alpha<2$, $\beta=1/(\alpha-1)$ and $d\in\mathbb{N}$.
	Define the function $y$ of $0<r\le d^{1/\alpha}$ as $y=f_r^{-1}(d)$.
	Then 
	\begin{equation}
		-\frac{y'''}{y''} = \frac{(2\alpha-1)ry^{\alpha-1}(y+r)^{\alpha-1} - (2-\alpha)\bigl( (y+r)^{2\alpha-1}-y^{2\alpha-1} \bigr)}
		{\bigl( (y+r)^{\alpha-1}-y^{\alpha-1} \bigr)^2y(y+r)}. \label{eqU18}
	\end{equation}
	In particular, $-y''' \asymp_\alpha d^\beta/r^{\beta+3}$ for all $0<r\le d^{1/\alpha}/2$.
\end{lemma}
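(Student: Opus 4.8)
The plan is to compute $y'''$ by logarithmic differentiation of the closed form for $y''$ obtained in Lemma~\ref{lemU2}. Writing $D=(y+r)^{\alpha-1}-y^{\alpha-1}$, so that $y''=d(\alpha-1)y^{\alpha-2}(y+r)^{\alpha-2}D^{-3}$, I would take $\log$ and differentiate in $r$ (with $d$ fixed) to get
\[
\frac{y'''}{y''}=(\alpha-2)\frac{y'}{y}+(\alpha-2)\frac{y'+1}{y+r}-3(\alpha-1)\,\frac{(y'+1)(y+r)^{\alpha-2}-y'y^{\alpha-2}}{D},
\]
then substitute $y'=-(y+r)^{\alpha-1}D^{-1}$ and $y'+1=-y^{\alpha-1}D^{-1}$ from \eqref{eqU16} and bring everything over the common denominator $D^2y(y+r)$. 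In the resulting numerator the terms telescope: the four ``first-order'' contributions combine to $(2-\alpha)\bigl((y+r)^{2\alpha-1}-y^{2\alpha-1}\bigr)+(\alpha-2)ry^{\alpha-1}(y+r)^{\alpha-1}$, the remaining contribution is $-3(\alpha-1)ry^{\alpha-1}(y+r)^{\alpha-1}$, and since $(\alpha-2)-3(\alpha-1)=-(2\alpha-1)$ this is exactly \eqref{eqU18}. This step is a mechanical simplification, entirely parallel to the one that produced \eqref{eqU15}.

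For the order of magnitude on $0<r\le d^{1/\alpha}/2$, I would feed in \eqref{eqU19} from Lemma~\ref{lemU2}, which gives $y\asymp y+r\asymp_\alpha(d/r)^\beta$ and $D\asymp_\alpha ry^{\alpha-2}$, so the denominator $D^2y(y+r)$ of \eqref{eqU18} is $\asymp_\alpha r^2y^{2\alpha-2}$. For the numerator I would set $t=r/y$, which lies in $(0,1]$ by Lemma~\ref{lemU0} applied with $c=1$ (as at the end of the proof of Lemma~\ref{lemU2}), and factor out $y^{2\alpha-1}$ to rewrite the numerator as $y^{2\alpha-1}g(t)$, where
\[
g(t)=(2\alpha-1)t(1+t)^{\alpha-1}-(2-\alpha)\bigl((1+t)^{2\alpha-1}-1\bigr).
\]
It then suffices to show $g(t)\asymp_\alpha t$ on $(0,1]$. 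Since $g(0)=0$ and $g$ is smooth with $g'(0)=(2\alpha-1)(\alpha-1)>0$, the ratio $g(t)/t$ extends continuously to $[0,1]$ with positive value at $0$, so by compactness the two-sided bound reduces to the single claim that $g(t)>0$ for $t\in(0,1]$. To prove this I would note $g'(t)=(2\alpha-1)(1+t)^{\alpha-2}\phi(1+t)$ with $\phi(u)=\alpha u-(\alpha-1)-(2-\alpha)u^\alpha$, and that on $[1,2]$,
\[
\phi'(u)=\alpha\bigl(1-(2-\alpha)u^{\alpha-1}\bigr)\ge\alpha\bigl(1-(2-\alpha)2^{\alpha-1}\bigr)>0,
\]
because the function $\alpha\mapsto2^{\alpha-1}(2-\alpha)$ equals $1$ at $\alpha=1$ and has derivative $2^{\alpha-1}\bigl((2-\alpha)\log2-1\bigr)<0$ on $(1,2)$, hence is $<1$ there. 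So $\phi$ is increasing on $[1,2]$ with $\phi(1)=\alpha-1>0$, whence $\phi>0$, $g$ is increasing on $[0,1]$, and $g>0$ on $(0,1]$. Consequently the numerator of \eqref{eqU18} is $\asymp_\alpha y^{2\alpha-1}t=ry^{2\alpha-2}$, so $-y'''/y''\asymp_\alpha1/r$, and multiplying by $y''\asymp_\alpha d^\beta/r^{\beta+2}$ from Lemma~\ref{lemU2} gives $-y'''\asymp_\alpha d^\beta/r^{\beta+3}$.

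The main obstacle is the lower bound for the numerator, i.e.\ the positivity of $g$ on $(0,1]$: the two terms of the numerator of \eqref{eqU18} are individually of size $\asymp_\alpha ry^{2\alpha-2}$ and could a priori cancel, so one genuinely needs the monotonicity argument for $\phi$, together with the elementary inequality $2^{\alpha-1}(2-\alpha)<1$ on $(1,2)$. The derivative identity \eqref{eqU18} itself and all the remaining $\asymp_\alpha$ bookkeeping are routine.
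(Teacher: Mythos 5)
Your proof is correct and follows essentially the same route as the paper: the identity \eqref{eqU18} is obtained by the same logarithmic differentiation of \eqref{eqU15} combined with the substitution \eqref{eqU16}, and the order-of-magnitude bound rests on \eqref{eqU19} together with the positivity of the numerator. The only cosmetic difference is in that last step: the paper applies the mean value theorem to $(y+r)^{2\alpha-1}-y^{2\alpha-1}$ and uses $y+r\le2y$ with the inequality $2^{1-\alpha}+\alpha-2>0$, which is exactly equivalent to your $2^{\alpha-1}(2-\alpha)<1$, whereas you normalize to $t=r/y$ and prove monotonicity of $g$.
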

\begin{proof}
	Taking the logarithmic derivative of both sides in \eqref{eqU15}, 
	we have 
	\begin{align*}
		\frac{y'''}{y''}
		&= -3(\alpha-1)\frac{(y'+1)(y+r)^{\alpha-2}-y'y^{\alpha-2}}{(y+r)^{\alpha-1}-y^{\alpha-1}}\\
		&\quad- (2-\alpha)\Bigl( \frac{y'}{y} + \frac{y'+1}{y+r} \Bigr).
	\end{align*}
	By \eqref{eqU16}, it follows that 
	\begin{equation}
	\begin{split}
		\frac{y'''}{y''} &= 3(\alpha-1)\frac{y^{\alpha-1}(y+r)^{\alpha-2}-(y+r)^{\alpha-1}y^{\alpha-2}}{\bigl( (y+r)^{\alpha-1}-y^{\alpha-1} \bigr)^2}\\
		&\quad+ \frac{2-\alpha}{(y+r)^{\alpha-1}-y^{\alpha-1}}\Bigl( \frac{(y+r)^{\alpha-1}}{y} + \frac{y^{\alpha-1}}{y+r} \Bigr).
	\end{split}\label{eqU17}
	\end{equation}
	The first term of the right-hand side in \eqref{eqU17} is equal to 
	\begin{align*}
		&\quad 3(\alpha-1)\frac{-ry^{\alpha-2}(y+r)^{\alpha-2}}{\bigl( (y+r)^{\alpha-1}-y^{\alpha-1} \bigr)^2}\\
		&= 3(\alpha-1)\frac{-ry^{\alpha-1}(y+r)^{\alpha-1}}{\bigl( (y+r)^{\alpha-1}-y^{\alpha-1} \bigr)^2y(y+r)},
	\end{align*}
	and the second term of the right-hand side in \eqref{eqU17} is equal to 
	\begin{align*}
		&\quad (2-\alpha)\frac{\bigl( (y+r)^{\alpha-1}-y^{\alpha-1} \bigr)\bigl( (y+r)^\alpha + y^\alpha \bigr)}{\bigl( (y+r)^{\alpha-1}-y^{\alpha-1} \bigr)^2y(y+r)}\\
		&= (2-\alpha)\frac{(y+r)^{2\alpha-1}-y^{2\alpha-1} - ry^{\alpha-1}(y+r)^{\alpha-1}}{\bigl( (y+r)^{\alpha-1}-y^{\alpha-1} \bigr)^2y(y+r)}.
	\end{align*}
	Thus, 
	\begin{align*}
		&\quad \frac{y'''}{y''}\bigl( (y+r)^{\alpha-1}-y^{\alpha-1} \bigr)^2y(y+r)\\
		&= -3(\alpha-1)ry^{\alpha-1}(y+r)^{\alpha-1}\\
		&\quad+ (2-\alpha)\bigl( (y+r)^{2\alpha-1}-y^{2\alpha-1} - ry^{\alpha-1}(y+r)^{\alpha-1} \bigr)\\
		&= -(2\alpha-1)ry^{\alpha-1}(y+r)^{\alpha-1}
		+ (2-\alpha)\bigl( (y+r)^{2\alpha-1}-y^{2\alpha-1} \bigr),
	\end{align*}
	which implies \eqref{eqU18}.
	\par
	Next, assume $0<r\le d^{1/\alpha}/2$.
	Then \eqref{eqU19} holds.
	By the mean value theorem, the numerator of the right-hand side in \eqref{eqU18} is equal to 
	\begin{align*}
		&\quad (2\alpha-1)ry^{\alpha-1}(y+r)^{\alpha-1} - (2-\alpha)\bigl( (y+r)^{2\alpha-1}-y^{2\alpha-1} \bigr)\\
		&= (2\alpha-1)r\bigl( y^{\alpha-1}(y+r)^{\alpha-1} - (2-\alpha)(y+r\theta)^{2\alpha-2} \bigr),
	\end{align*}
	where $\theta=\theta(y,r)\in(0,1)$.
	Since the inequality $y+r\le2y$ holds by Lemma~\ref{lemU0}, 
	it turns out that 
	\begin{align*}
		&\quad y^{\alpha-1}(y+r)^{\alpha-1}
		> y^{\alpha-1}(y+r)^{\alpha-1} - (2-\alpha)(y+r\theta)^{2\alpha-2}\\
		&> 2^{1-\alpha}(y+r)^{2\alpha-2} - (2-\alpha)(y+r)^{2\alpha-2}
		= (2^{1-\alpha}+\alpha-2)(y+r)^{2\alpha-2}.
	\end{align*}
	Noting the inequality $2^{1-\alpha}+\alpha-2>0$, 
	we have 
	\[
	y^{\alpha-1}(y+r)^{\alpha-1} - (2-\alpha)(y+r\theta)^{2\alpha-2} \asymp_\alpha y^{2\alpha-2}.
	\]
	Thus, the numerator of the right-hand side in \eqref{eqU18} is $\asymp_\alpha ry^{2\alpha-2}$.
	This, \eqref{eqU18} and \eqref{eqU19} yield that $-y''' \asymp_\alpha y''/r$.
	Finally, Lemma~\ref{lemU2} implies that $-y''' \asymp_\alpha d^\beta/r^{\beta+3}$.
\end{proof}

\section{Proofs of Lemmas~$\ref{lemU4}$--$\ref{lemU5}$}\label{proof2}

We prove Lemmas~\ref{lemU4}--\ref{lemU5} by using Lemmas~\ref{2ndderiv}, \ref{3rdderiv}, \ref{ETK}, \ref{lemU2} and \ref{lemU3}.

\begin{proof}[Proof of Lemma~$\ref{lemU4}$]
	Note that (i) the inequality $(3-2\beta)/(\beta-1)<\beta-1$ is equivalent to $\alpha<1+1/\sqrt{2}\approx1.707$ if $1<\alpha<2$; 
	(ii) the inequality $(3-2\beta)/(\beta-1)<1/\alpha$ is equivalent to $\alpha<(\sqrt{10}+2)/3\approx1.721$ if $1<\alpha<2$, 
	since 
	\begin{align*}
		&\quad (3-2\beta)/(\beta-1) < 1/\alpha
		\iff \frac{3\alpha-5}{2-\alpha} = \frac{3(\alpha-1)-2}{1-(\alpha-1)} < \frac{1}{\alpha}\\
		&\iff 3\alpha^2-4\alpha-2 < 0
		\iff \alpha < (\sqrt{10}+2)/3.
	\end{align*}
	Now, we can take a positive number $\gamma_1$ with $(3-2\beta)/(\beta-1)<\gamma_1<\min\{ 1/\alpha, \beta-1 \}$ due to the assumption $1<\alpha<1+1/\sqrt{2}$. 
	Then, for sufficiently large $d\ge1$, 
	\begin{align}
		E_1(d)
		&= \#\bigl\{ r\le d^{1/\alpha}/4 : \{f_r^{-1}(d)\} + C_1d^{\beta-1}/r^\beta > 1 \bigr\}\nonumber\\
		&\le d^{\gamma_1} + \sum_{j=\lfloor{\gamma_1\log_2 d}\rfloor}^{\lfloor{(1/\alpha)\log_2 d}\rfloor-2}
		\#\bigl\{ 2^j<r\le2^{j+1} : \{f_r^{-1}(d)\} + C_1d^{\beta-1}/r^\beta > 1 \bigr\}. \label{eqU02}
	\end{align}
	Denote by $D_1(d,R)$ the discrepancy of the sequence $(f_r^{-1}(d))_{R<r\le2R}$.
	The second term of \eqref{eqU02} is bounded from above by 
	\begin{align}
		&\quad \sum_{j=\lfloor{\gamma_1\log_2 d}\rfloor}^{\lfloor{(1/\alpha)\log_2 d}\rfloor-2}
		\#\bigl\{ 2^j<r\le2^{j+1} : \{f_r^{-1}(d)\} + C_1d^{\beta-1}/2^{j\beta} > 1 \bigr\}\nonumber\\
		&\le \sum_{j=\lfloor{\gamma_1\log_2 d}\rfloor}^{\lfloor{(1/\alpha)\log_2 d}\rfloor-2}
		\bigl( C_1d^{\beta-1}/2^{j\beta} + D_1(d,2^j) \bigr)\cdot2^j\nonumber\\
		&= \sum_{j=\lfloor{\gamma_1\log_2 d}\rfloor}^{\lfloor{(1/\alpha)\log_2 d}\rfloor-2}
		\bigl( C_1d^{\beta-1}/2^{j(\beta-1)} + D_1(d,2^j)\cdot2^j \bigr)\nonumber\\
		&\ll_\alpha d^{(1-\gamma_1)(\beta-1)}
		+ \sum_{j=\lfloor{\gamma_1\log_2 d}\rfloor}^{\lfloor{(1/\alpha)\log_2 d}\rfloor-2} D_1(d,2^j)\cdot2^j.
		\label{eqU03}
	\end{align}
	\par
	Now, we estimate the discrepancy $D_1(d,R)$ 
	when $R$ is an integer with $d^{\gamma_1}/2<R\le d^{1/\alpha}/4$.
	Set $H=d^{-\beta/3}R^{(\beta+2)/3}$.
	By Lemma~\ref{ETK}, 
	\[
	D_1(d,R)R \ll H^{-1}R + \sum_{1\le h\le H} \frac{1}{h}\abs{\sum_{R<r\le2R} e(hf_r^{-1}(d))}.
	\]
	Since $2R\le d^{1/\alpha}/2$, Lemmas~\ref{2ndderiv} and \ref{lemU2} imply that 
	\begin{align*}
		\sum_{R<r\le2R} e(hf_r^{-1}(d))
		&\ll_\alpha R\bigl( hd^\beta/R^{\beta+2} \bigr)^{1/2} + \bigl( hd^\beta/R^{\beta+2} \bigr)^{-1/2}\\
		&= h^{1/2}d^{\beta/2}R^{-\beta/2} + h^{-1/2}d^{-\beta/2}R^{(\beta+2)/2}.
	\end{align*}
	Thus, 
	\begin{align*}
		D_1(d,R)R
		&\ll_\alpha H^{-1}R
		+ \sum_{1\le h\le H} \bigl( h^{1/2-1}d^{\beta/2}R^{-\beta/2} + h^{-1/2-1}d^{-\beta/2}R^{(\beta+2)/2} \bigr)\\
		&\ll H^{-1}R + H^{1/2}d^{\beta/2}R^{-\beta/2} + d^{-\beta/2}R^{(\beta+2)/2}\\
		&\ll d^{\beta/3}R^{(1-\beta)/3} + d^{-\beta/2}R^{(\beta+2)/2}.
	\end{align*}
	The second term of \eqref{eqU03} is bounded from above as follows: 
	\begin{equation}
	\begin{split}
		&\quad \sum_{j=\lfloor{\gamma_1\log_2 d}\rfloor}^{\lfloor{(1/\alpha)\log_2 d}\rfloor-2} D_1(d,2^j)\cdot2^j\\
		&\ll_\alpha \sum_{j=\lfloor{\gamma_1\log_2 d}\rfloor}^{\lfloor{(1/\alpha)\log_2 d}\rfloor-2}
		\bigl( d^{\beta/3}\cdot2^{j(1-\beta)/3} + d^{-\beta/2}\cdot2^{j(\beta+2)/2} \bigr)\\
		&\ll_\alpha d^{\beta/3}\cdot d^{\gamma_1(1-\beta)/3} + d^{-\beta/2}\cdot d^{(1/\alpha)(\beta+2)/2}\\
		&= d^{\beta/3+\gamma_1(1-\beta)/3} + d^{1/2\alpha}. 
	\end{split}\label{eqU04}
	\end{equation}
	Since $\gamma_1>(3-2\beta)/(\beta-1)$, it turns out that 
	\[
	\beta/3 - \gamma_1(\beta-1)/3
	< \beta/3 - (3-2\beta)/3
	= \beta-1,
	\]
	whence the first term of \eqref{eqU04} is equal to $o(d^{\beta-1})$.
	Also, noting the equality $\alpha\beta=\beta+1$, 
	we have the following equivalence if $\alpha>1$: 
	\begin{equation}
		1/2\alpha < \beta-1
		\iff 1/2 < \beta+1-\alpha
		\iff -1/2 < \beta-\alpha. \label{eqU05}
	\end{equation}
	From the assumption $1<\alpha<1+1/\sqrt{2}$, it follows that 
	\begin{equation*}
		\beta-\alpha > \sqrt{2} - (1+1/\sqrt{2})
		= \sqrt{2}/2 - 1 > 1/2-1 = -1/2.
	\end{equation*}
	Thus, the second term of \eqref{eqU04} is equal to $o(d^{\beta-1})$.
	Moreover, \eqref{eqU03} and \eqref{eqU02} are also equal to $o(d^{\beta-1})$.
	Therefore, $E_1(d) = o(d^{\beta-1})$.
\end{proof}

\begin{proof}[Proof of Lemma~$\ref{lemU4'}$]
	Note that the inequality $1<\beta\le\sqrt{2}$ is equivalent to $1+1/\sqrt{2}\le\alpha<2$.
	Also, if $5/4<\alpha<2$ (i.e., $1<\beta<4$), then the following equivalences hold: 
	\begin{gather*}
		\beta-1 \le \frac{3-2\beta}{\beta-1} \iff \alpha \ge 1+1/\sqrt{2};\\
		\frac{3-2\beta}{\beta-1} < \frac{6\beta-7}{4-\beta} \iff \alpha < \frac{\sqrt{21}+4}{5} \approx 1.717;\\
		\frac{3-2\beta}{\beta-1} < \frac{4\beta-3}{\beta+3} \iff \alpha < \frac{\sqrt{10}+2}{3} \approx 1.721;\\
		\frac{3\alpha-5}{2-\alpha} = \frac{3-2\beta}{\beta-1} < 1/\alpha \iff \alpha < \frac{\sqrt{10}+2}{3}. 
	\end{gather*}
	Now, we can take positive numbers $\gamma_1$ and $\gamma_2$ with 
	\begin{equation}
		0 < \gamma_2 < \beta-1
		\le \frac{3-2\beta}{\beta-1} < \gamma_1 < \min\Bigl\{ \frac{6\beta-7}{4-\beta}, \frac{4\beta-3}{\beta+3}, 1/\alpha \Bigr\}
		\label{eqU06}
	\end{equation}
	due to the assumption $1+1/\sqrt{2}\le\alpha<(\sqrt{21}+4)/5$.
	Then, for sufficiently large $d\ge1$, 
	\begin{align*}
		E_1(d)
		&= \#\bigl\{ r\le d^{1/\alpha}/4 : \{f_r^{-1}(d)\} + C_1d^{\beta-1}/r^\beta > 1 \bigr\}\\
		&\le d^{\gamma_2} + \sum_{j=\lfloor{\gamma_2\log_2 d}\rfloor}^{\lfloor{\gamma_1\log_2 d}\rfloor-1}
		\#\bigl\{ 2^j<r\le2^{j+1} : \{f_r^{-1}(d)\} + C_1d^{\beta-1}/r^\beta > 1 \bigr\}\\
		&\quad+ \sum_{j=\lfloor{\gamma_1\log_2 d}\rfloor}^{\lfloor{(1/\alpha)\log_2 d}\rfloor-2}
		\#\bigl\{ 2^j<r\le2^{j+1} : \{f_r^{-1}(d)\} + C_1d^{\beta-1}/r^\beta > 1 \bigr\}.
	\end{align*}
	In the same way as the proof of Lemma~\ref{lemU4}, it follows that 
	\begin{equation}
	\begin{split}
		E_1(d) &\ll_\alpha d^{\gamma_2} + d^{(1-\gamma_2)(\beta-1)} + d^{(1-\gamma_1)(\beta-1)}\\
		&\quad+ \sum_{j=\lfloor{\gamma_2\log_2 d}\rfloor}^{\lfloor{\gamma_1\log_2 d}\rfloor-1} D_1(d,2^j)\cdot2^j
		+ \sum_{j=\lfloor{\gamma_1\log_2 d}\rfloor}^{\lfloor{(1/\alpha)\log_2 d}\rfloor-2} D_1(d,2^j)\cdot2^j.
	\end{split}\label{eqU07}
	\end{equation}
	\par
	Now, we estimate the discrepancy $D_1(d,R)$ 
	when $R$ is an integer with $d^{\gamma_2}/2<R\le d^{\gamma_1}/2<d^{1/\alpha}/4$.
	Set $H=d^{-\beta/7}R^{(\beta+3)/7}$.
	Since $2R\le d^{1/\alpha}/2$, Lemmas~\ref{ETK}, \ref{3rdderiv} and \ref{lemU3} imply that 
	\begin{align*}
		D_1(d,R)R &\ll_\alpha H^{-1}R + H^{1/6}R\cdot\bigl( d^\beta/R^{\beta+3} \bigr)^{1/6}
		+ \bigl( d^\beta/R^{\beta+3} \bigr)^{-1/3}\\
		&\ll d^{\beta/7}R^{(4-\beta)/7} + d^{-\beta/3}R^{(\beta+3)/3}.
	\end{align*}
	Noting the inequality $1<\beta\le\sqrt{2}<4$, we obtain 
	\begin{equation}
	\begin{split}
		&\quad \sum_{j=\lfloor{\gamma_2\log_2 d}\rfloor}^{\lfloor{\gamma_1\log_2 d}\rfloor-1} D_1(d,2^j)\cdot2^j\\
		&\ll_\alpha \sum_{j=\lfloor{\gamma_2\log_2 d}\rfloor}^{\lfloor{\gamma_1\log_2 d}\rfloor-1}
		\bigl( d^{\beta/7}\cdot2^{j(4-\beta)/7} + d^{-\beta/3}\cdot2^{j(\beta+3)/3} \bigr)\\
		&\ll_\alpha d^{\beta/7}\cdot d^{\gamma_1(4-\beta)/7} + d^{-\beta/3}\cdot d^{\gamma_1(\beta+3)/3}.
	\end{split}\label{eqU08}
	\end{equation}
	Also, the inequality \eqref{eqU04} holds in the same way as the proof of Lemma~\ref{lemU4}.
	Thus, the inequalities \eqref{eqU04}, \eqref{eqU07} and \eqref{eqU08} yield that 
	\begin{equation}
	\begin{split}
		E_1(d) &\ll_\alpha d^{\gamma_2} + d^{(1-\gamma_2)(\beta-1)} + d^{(1-\gamma_1)(\beta-1)}\\
		&\quad+ d^{\beta/7+\gamma_1(4-\beta)/7} + d^{-\beta/3+\gamma_1(\beta+3)/3}\\
		&\quad+ d^{\beta/3 - \gamma_1(\beta-1)/3} + d^{1/2\alpha}.
	\end{split}\label{eqU09}
	\end{equation}
	By \eqref{eqU06}, all terms of \eqref{eqU09} except for the last term are equal to $o(d^{\beta-1})$.
	Also, noting the inequality $\alpha<(\sqrt{21}+4)/5<(\sqrt{10}+2)/3$, 
	we have 
	\begin{equation}
	\begin{split}
		\beta-\alpha &> 3/(\sqrt{10}-1) - (\sqrt{10}+2)/3\\
		&= (\sqrt{10}+1)/3 - (\sqrt{10}+2)/3\\
		&= -1/3 > -1/2.
	\end{split}\label{eqU10}
	\end{equation}
	By \eqref{eqU05}, the last term of \eqref{eqU09} is equal to $o(d^{\beta-1})$.
	Therefore, $E_1(d)=o(d^{\beta-1})$.
\end{proof}

\begin{proof}[Proof of Lemma~$\ref{lemU5}$]
	Denote by $D_2(d,N)$ the discrepancy of the sequence $( (n^\alpha+d)^{1/\alpha} )_{N\le n<2N}$.
	Then 
	\begin{align}
		E_2(d) &= \#\bigl\{ n<C_2d^{1/\alpha} : \{(n^\alpha+d)^{1/\alpha}\} + 2d^{1/\alpha-1} > 1 \bigr\}\nonumber\\
		&\le \sum_{j=0}^{\lfloor{\log_2(C_2d^{1/\alpha})}\rfloor}
		\#\bigl\{ 2^j\le n<2^{j+1} : \{(n^\alpha+d)^{1/\alpha}\} + 2d^{1/\alpha-1} > 1 \bigr\}\nonumber\\
		&\le \sum_{j=0}^{\lfloor{\log_2(C_2d^{1/\alpha})}\rfloor} \bigl( 2d^{1/\alpha-1}\cdot2^j + D_2(d,2^j)\cdot2^j \bigr)\nonumber\\\
		&\ll_\alpha d^{1/\alpha-1}\cdot d^{1/\alpha} + \sum_{j=0}^{\lfloor{\log_2(C_2d^{1/\alpha})}\rfloor} D_2(d,2^j)\cdot2^j. \label{eqU11}
	\end{align}
	We estimate the discrepancy $D_2(d,N)$ 
	when $d$ and $N$ are positive integers with $1\le N\le C_2d^{1/\alpha}$.
	Set $H=d^{(\alpha-1)/3\alpha}N^{(2-\alpha)/3}$.
	By Lemma~\ref{ETK}, 
	\[
	D_2(d,N)N \ll H^{-1}N + \sum_{1\le h\le H} \frac{1}{h}\abs{\sum_{N\le n<2N} e(h(n^\alpha+d)^{1/\alpha})}.
	\]
	The second derivative of the function $y=(x^\alpha+d)^{1/\alpha}$ is equal to 
	\[
	y''=(\alpha-1)dx^{\alpha-2}(x^\alpha+d)^{1/\alpha-2},
	\]
	which satisfies that $y'' \asymp_\alpha d^{1/\alpha-1}N^{\alpha-2}$ for all $N\le x<2N$ (because of $0<x^d\ll d$).
	By Lemma~\ref{2ndderiv}, 
	\begin{align*}
		&\quad \sum_{N\le n<2N} e(h(n^\alpha+d)^{1/\alpha})\\
		&\ll_\alpha N\bigl( hd^{1/\alpha-1}N^{\alpha-2} \bigr)^{1/2} + \bigl( hd^{1/\alpha-1}N^{\alpha-2} \bigr)^{-1/2}\\
		&= h^{1/2}d^{(1-\alpha)/2\alpha}N^{\alpha/2} + h^{-1/2}d^{(\alpha-1)/2\alpha}N^{(2-\alpha)/2}.
	\end{align*}
	Thus, 
	\begin{align*}
		D_2(d,N)N &\ll_\alpha H^{-1}N + H^{1/2}d^{(1-\alpha)/2\alpha}N^{\alpha/2} + d^{(\alpha-1)/2\alpha}N^{(2-\alpha)/2}\\
		&\ll d^{(1-\alpha)/3\alpha}N^{(\alpha+1)/3} + d^{(\alpha-1)/2\alpha}N^{(2-\alpha)/2},
	\end{align*}
	and moreover, the second term of \eqref{eqU11} is bounded from above as follows: 
	\begin{align*}
		&\quad \sum_{j=0}^{\lfloor{\log_2(C_2d^{1/\alpha})}\rfloor} D_2(d,2^j)\cdot2^j\\
		&\ll_\alpha \sum_{j=0}^{\lfloor{\log_2(C_2d^{1/\alpha})}\rfloor}
		\bigl( d^{(1-\alpha)/3\alpha}\cdot2^{j(\alpha+1)/3} + d^{(\alpha-1)/2\alpha}\cdot2^{j(2-\alpha)/2} \bigr)\\
		&\ll_\alpha d^{(1-\alpha)/3\alpha}\cdot d^{(\alpha+1)/3\alpha} + d^{(\alpha-1)/2\alpha}\cdot d^{(2-\alpha)/2\alpha}\\
		&= d^{2/3\alpha} + d^{1/2\alpha} \ll d^{2/3\alpha}.
	\end{align*}
	Also, noting the equality $\alpha\beta=\beta+1$, 
	we have the following equivalence if $\alpha>1$: 
	\begin{equation*}
		2/3\alpha < \beta-1
		\iff 2/3 < \beta+1-\alpha
		\iff -1/3 < \beta-\alpha.
	\end{equation*}
	Since \eqref{eqU10} follows from the assumption $1<\alpha<(\sqrt{10}+2)/3$, 
	the second term of \eqref{eqU11} is equal to $o(d^{\beta-1})$.
	Since the first term of \eqref{eqU11} is also equal to $o(d^{\beta-1})$, 
	we complete the proof.
\end{proof}

\section{Proof of Theorem~$\ref{main2}$}\label{AE}

We prove Proposition~\ref{UB}, Lemmas~\ref{lemU6}, \ref{lemU7} and Theorem~\ref{main2}.

\begin{proof}[Proof of Proposition~$\ref{UB}$]
	The number $\cN_\alpha^{\ge l}(d)$ is equal to $E_0(d,l-1)$ defined in \eqref{eqU20}.
	Thus, by \eqref{eqU12}, we have 
	\begin{align*}
		\cN_\alpha^{\ge l}(d)
		&\le \#\{ (n,r)\in\mathbb{N}^2 : l\le r\le(d-1)^{1/\alpha}/4,\ d-1 < f_r(n) < d+1 \}\\
		&\quad+ \#\{ (n,r)\in\mathbb{N}^2 : r>(d-1)^{1/\alpha}/4,\ d-1 < f_r(n) < d+1 \}.
	\end{align*}
	The first term of the right-hand side is equal to $O_\alpha(d^{\beta-1}\sum_{r\ge l} 1/r^\beta) + E_1^{\ge l}(d-1)$, and 
	the second term of the right-hand side is equal to $O_\alpha(d^{2/\alpha-1}) + E_2(d-1)$ 
	in the same way as Steps~3 and 4 of the proof of Proposition~\ref{limsup}.
	Since $\sum_{r\ge l} 1/r^\beta \ll_\alpha l^{1-\beta}$, we obtain the desired inequality.
\end{proof}

\begin{proof}[Proof of Lemma~$\ref{lemU6}$]
	By the definition of $E_1^{\ge l}(d)$, 
	\begin{align*}
		E_1^{\ge l}(d)
		&= \#\bigl\{ r\in\mathbb{N} : l\le r\le d^{1/\alpha}/4,\ \{f_r^{-1}(d)\} + C_1d^{\beta-1}/r^\beta > 1 \bigr\}\\
		&\le 1 + \sum_{j=\lfloor{\log_2 l}\rfloor}^{\lfloor{(1/\alpha)\log_2 d}\rfloor-2}
		\#\bigl\{ 2^j<r\le2^{j+1} : \{f_r^{-1}(d)\} + C_1d^{\beta-1}/r^\beta > 1 \bigr\}.
	\end{align*}
	Thus, 
	\[
	E_1^{\ge l}(d)
	\ll_\alpha d^{\beta-1}l^{1-\beta} + d^{\beta/3}l^{(1-\beta)/3} + d^{1/2\alpha}
	\]
	in the same way as the proof of Lemmas~\ref{lemU4} (replace $d^{\gamma_1}$ with $l$).
	Since the inequality $1/2\alpha\le2/\alpha-1$ follows from the assumption $1<\alpha\le3/2$, 
	we obtain the desired inequality.
\end{proof}

\begin{proof}[Proof of Lemma~$\ref{lemU7}$]
	By the proof of Lemmas~\ref{lemU5}, 
	\[
	E_2(d) \ll_\alpha d^{2/\alpha-1} + d^{2/3\alpha}.
	\]
	Since the inequality $2/3\alpha\le2/\alpha-1$ follows from the assumption $1<\alpha\le4/3$, 
	we obtain the desired inequality.
\end{proof}

\begin{proof}[Proof of Theorem~$\ref{main2}$]
	Let $\alpha>1$ and $N\in\mathbb{N}$.
	First, we show that 
	\begin{equation}
		\cE_\alpha(N) \le N^2 + 6\sum_{l=1}^{N-1} \sum_{(l-1)\alpha N^{\alpha-1}\le d-1<l\alpha N^{\alpha-1}} \cN_\alpha^{\ge l}(d)^2.
		\label{eq21}
	\end{equation}
	Noting that the equation $\lfloor{n_1^\alpha}\rfloor+\lfloor{n_2^\alpha}\rfloor=\lfloor{n_3^\alpha}\rfloor+\lfloor{n_4^\alpha}\rfloor$ 
	is equivalent to 
	\[
	\exists d\in\mathbb{Z},\ 
	\lfloor{n_1^\alpha}\rfloor-\lfloor{n_3^\alpha}\rfloor=d \quad\text{and}\quad
	\lfloor{n_4^\alpha}\rfloor-\lfloor{n_2^\alpha}\rfloor=d,
	\]
	we have 
	\begin{align}
		\cE_\alpha(N)
		&= N^2 + 2\sum_{d=1}^\infty \Biggl( 
		\#\Biggl\{ (n,r)\in\mathbb{N}^2 : 
		\begin{array}{c}
			n+r\le N,\\
			\lfloor{(n+r)^\alpha}\rfloor-\lfloor{n^\alpha}\rfloor=d
		\end{array}
		\Biggr\} \Biggr)^2\nonumber\\
		&= N^2 + 2\sum_{d=1}^\infty \Biggl( \sum_{r=1}^{N-1} 
		\#\Biggl\{ n\in\mathbb{N} : 
		\begin{array}{c}
			n+r\le N,\\
			\lfloor{(n+r)^\alpha}\rfloor-\lfloor{n^\alpha}\rfloor=d
		\end{array}
		\Biggr\} \Biggr)^2. \label{eq22}
	\end{align}
	Set the summands of the inner sum in \eqref{eq22} as 
	\[
	N_{r,d} =
	\#\Biggl\{ n\in\mathbb{N} : 
	\begin{array}{c}
		n+r\le N,\\
		\lfloor{(n+r)^\alpha}\rfloor-\lfloor{n^\alpha}\rfloor=d
	\end{array}
	\Biggr\} \quad (d\ge1,\ 1\le r<N).
	\]
	Then 
	\begin{equation}
	\begin{split}
		\cE_\alpha(N)
		&= N^2 + 2\sum_{d=1}^\infty \Biggl( \sum_{r=1}^{N-1} N_{r,d} \Biggr)^2\\
		&= N^2 + 2\sum_{d=1}^\infty \sum_{r=1}^{N-1} N_{r,d}^2 + 4\sum_{d=1}^\infty \sum_{1\le r_1<r_2<N} N_{r_1,d}N_{r_2,d}.
	\end{split}\label{eq23}
	\end{equation}
	Now, consider the equation $\lfloor{(n+r)^\alpha}\rfloor-\lfloor{n^\alpha}\rfloor=d$ in the variables $n,r\in\mathbb{N}$ when $d\ge1$ is an integer.
	If $n+r\le N$, then the mean value theorem implies that 
	\begin{equation}
		d-1 < (n+r)^\alpha-n^\alpha < r\alpha(n+r)^{\alpha-1} \le r\alpha N^{\alpha-1}.
		\label{eq24}
	\end{equation}
	By this, 
	\begin{equation}
	\begin{split}
		\sum_{d=1}^\infty \sum_{r=1}^{N-1} N_{r,d}^2
		&= \sum_{r=1}^{N-1} \sum_{0\le d-1<r\alpha N^{\alpha-1}} N_{r,d}^2\\
		&\le \sum_{l=1}^{N-1} \sum_{(l-1)\alpha N^{\alpha-1}\le d-1<l\alpha N^{\alpha-1}} \cN_\alpha^{\ge l}(d)^2,
	\end{split}\label{eq25}
	\end{equation}
	where we have used 
	\begin{equation}
	\begin{split}
		\sum_{r=1}^{N-1} \sum_{0\le d-1<r\alpha N^{\alpha-1}}
		&= \sum_{r=1}^{N-1} \sum_{l=1}^r \sum_{(l-1)\alpha N^{\alpha-1}\le d-1<l\alpha N^{\alpha-1}}\\
		&= \sum_{l=1}^{N-1} \sum_{r=l}^{N-1} \sum_{(l-1)\alpha N^{\alpha-1}\le d-1<l\alpha N^{\alpha-1}}
	\end{split}\label{eq26}
	\end{equation}
	to obtain the last inequality.
	Also, 
	\begin{equation}
		\sum_{d=1}^\infty \sum_{1\le r_1<r_2<N} N_{r_1,d}N_{r_2,d}
		= \sum_{2\le r_2<N} \sum_{d=1}^\infty \sum_{1\le r_1<r_2} N_{r_1,d}N_{r_2,d}.
		\label{eq27}
	\end{equation}
	By \eqref{eq26} (when replacing $r$ and $N$ with $r_1$ and $r_2$, respectively) and the fact around \eqref{eq24}, 
	\begin{align*}
		&\quad \sum_{d=1}^\infty \sum_{1\le r_1<r_2} N_{r_1,d}N_{r_2,d}
		= \sum_{1\le r_1<r_2} \sum_{0\le d-1<r_1\alpha N^{\alpha-1}} N_{r_1,d}N_{r_2,d}\\
		&= \sum_{l=1}^{r_2-1} \sum_{r_1=l}^{r_2-1} \sum_{(l-1)\alpha N^{\alpha-1}\le d-1<l\alpha N^{\alpha-1}} N_{r_1,d}N_{r_2,d}\\
		&\le \sum_{1\le l<r_2} \sum_{(l-1)\alpha N^{\alpha-1}\le d-1<l\alpha N^{\alpha-1}} \cN_\alpha^{\ge l}(d)N_{r_2,d}.
	\end{align*}
	By this and \eqref{eq27}, 
	\begin{align*}
		&\quad \sum_{d=1}^\infty \sum_{1\le r_1<r_2<N} N_{r_1,d}N_{r_2,d}\\
		&\le \sum_{2\le r_2<N} \sum_{1\le l<r_2} \sum_{(l-1)\alpha N^{\alpha-1}\le d-1<l\alpha N^{\alpha-1}} \cN_\alpha^{\ge l}(d)N_{r_2,d}\\
		&= \sum_{l=1}^{N-2} \sum_{l<r_2<N} \sum_{(l-1)\alpha N^{\alpha-1}\le d-1<l\alpha N^{\alpha-1}} \cN_\alpha^{\ge l}(d)N_{r_2,d}\\
		&\le \sum_{l=1}^{N-2} \sum_{(l-1)\alpha N^{\alpha-1}\le d-1<l\alpha N^{\alpha-1}} \cN_\alpha^{\ge l}(d)^2.
	\end{align*}
	This, \eqref{eq25} and \eqref{eq23} yield \eqref{eq21}.
	\par
	Next, assuming $1<\alpha\le4/3$, we show that $\cE_\alpha(N) \ll_\alpha N^{4-\alpha}$.
	Proposition~\ref{UB}, Lemmas~\ref{lemU6} and \ref{lemU7} imply that 
	\[
	\cN_\alpha^{\ge l}(d) \ll_\alpha l^{1-\beta}d^{\beta-1} + l^{(1-\beta)/3}d^{\beta/3} + d^{2/\alpha-1}.
	\]
	This and \eqref{eq21} yield that 
	\begin{align*}
		\cE_\alpha(N)
		&\ll N^2 + \sum_{l=1}^{N-1} \sum_{(l-1)\alpha N^{\alpha-1}\le d-1<l\alpha N^{\alpha-1}} \cN_\alpha^{\ge l}(d)^2\\
		&\ll_\alpha N^2 + \sum_{l=1}^{N-1} \sum_{(l-1)\alpha N^{\alpha-1}\le d-1<l\alpha N^{\alpha-1}}
		(l^{2-2\beta}d^{2\beta-2} + l^{2(1-\beta)/3}d^{2\beta/3} + d^{4/\alpha-2}).
	\end{align*}
	The first sum with summands $l^{2-2\beta}d^{2\beta-2}$ is 
	\begin{align*}
		&\quad \sum_{l=1}^{N-1} \sum_{(l-1)\alpha N^{\alpha-1}\le d-1<l\alpha N^{\alpha-1}} l^{2-2\beta}d^{2\beta-2}\\
		&\le \sum_{l=1}^{N-1} \int_{\lceil{(l-1)\alpha N^{\alpha-1}}\rceil+1}^{\lceil{l\alpha N^{\alpha-1}}\rceil+1} l^{2-2\beta}x^{2\beta-2}\,dx\\
		&\ll_\alpha \sum_{l=1}^{N-1} l^{2-2\beta}\cdot N^{\alpha-1}( lN^{\alpha-1} )^{2\beta-2}\\
		&= \sum_{l=1}^{N-1} N^{3-\alpha}
		< N^{4-\alpha}.
	\end{align*}
	The second sum with summands $l^{2(1-\beta)/3}d^{2\beta/3}$ is 
	\begin{align*}
		&\quad \sum_{l=1}^{N-1} \sum_{(l-1)\alpha N^{\alpha-1}\le d-1<l\alpha N^{\alpha-1}} l^{2(1-\beta)/3}d^{2\beta/3}\\
		&\le \sum_{l=1}^{N-1} \int_{\lceil{(l-1)\alpha N^{\alpha-1}}\rceil+1}^{\lceil{l\alpha N^{\alpha-1}}\rceil+1} l^{2(1-\beta)/3}x^{2\beta/3}\,dx\\
		&\ll_\alpha \sum_{l=1}^{N-1} l^{2(1-\beta)/3}\cdot N^{\alpha-1}( lN^{\alpha-1} )^{2\beta/3}\\
		&= \sum_{l=1}^{N-1} l^{2/3}N^{\alpha-1/3}
		< N^{\alpha+4/3}.
	\end{align*}
	The third sum with summands $d^{4/\alpha-2}$ is 
	\begin{align*}
		&\quad \sum_{l=1}^{N-1} \sum_{(l-1)\alpha N^{\alpha-1}\le d-1<l\alpha N^{\alpha-1}} d^{4/\alpha-2}\\
		&\le \sum_{l=1}^{N-1} \int_{\lceil{(l-1)\alpha N^{\alpha-1}}\rceil+1}^{\lceil{l\alpha N^{\alpha-1}}\rceil+1} x^{4/\alpha-2}\,dx
		\ll_\alpha \sum_{l=1}^{N-1} N^{\alpha-1}( lN^{\alpha-1} )^{4/\alpha-2}\\
		&= \sum_{l=1}^{N-1} l^{4/\alpha-2}N^{5-\alpha-4/\alpha}
		< N^{4/\alpha-1}N^{5-\alpha-4/\alpha} = N^{4-\alpha}.
	\end{align*}
	Since the inequality $\alpha+4/3 \le 4-\alpha$ follows from the assumption $1<\alpha\le4/3$, 
	we obtain $\cE_\alpha(N) \ll_\alpha N^{4-\alpha}$.
\end{proof}

\section*{Acknowledgments}
The author thanks Dr.\ Kota Saito for reading the first draft and finding mistakes.
Also, the author is grateful to Prof.\ Christoph Aistleitner for 
suggesting that Theorem~\ref{main0} probably gives a precise upper bound for the additive energies of Piatetski-Shapiro sequences and 
telling Refs.~\cite{Robert-Sargos, Aistleitner1} to the author.
The author was supported by JSPS KAKENHI Grant Numbers JP19J20161, JP22J00339 and JP22KJ1621.

\end{document}